\begin{document}

\newtheorem{thm}{Theorem}[section]
\newtheorem{prop}[thm]{Proposition}
\newtheorem{coro}[thm]{Corollary}
\newtheorem{conj}[thm]{Conjecture}
\newtheorem{example}[thm]{Example}
\newtheorem{lem}[thm]{Lemma}
\newtheorem{rem}[thm]{Remark}
\newtheorem{hy}[thm]{Hypothesis}
\newtheorem*{acks}{Acknowledgements}
\theoremstyle{definition}
\newtheorem{de}[thm]{Definition}
\xymatrixcolsep{5pc}

\newcommand{\C}{{\mathbb{C}}}
\newcommand{\Z}{{\mathbb{Z}}}
\newcommand{\N}{{\mathbb{N}}}
\newcommand{\Q}{{\mathbb{Q}}}
\newcommand{\te}[1]{\textnormal{{#1}}}
\newcommand{\set}[2]{{
    \left.\left\{
        {#1}
    \,\right|\,
        {#2}
    \right\}
}}
\newcommand{\sett}[2]{{
    \left\{
        {#1}
    \,\left|\,
        {#2}
    \right\}\right.
}}

\newcommand{\choice}[2]{{
\left[
\begin{array}{c}
{#1}\\{#2}
\end{array}
\right]
}}
\def \<{{\langle}}
\def \>{{\rangle}}

\def\({\left(}

\def\){\right)}

\newcommand{\overit}[2]{{
    \mathop{{#1}}\limits^{{#2}}
}}
\newcommand{\belowit}[2]{{
    \mathop{{#1}}\limits_{{#2}}
}}

\newcommand{\wt}[1]{\widetilde{#1}}

\newcommand{\wh}[1]{\widehat{#1}}

\newcommand{\no}[1]{{
    \mathopen{\overset{\circ}{
    \mathsmaller{\mathsmaller{\circ}}}
    }{#1}\mathclose{\overset{\circ}{\mathsmaller{\mathsmaller{\circ}}}}
}}

\newlength{\dhatheight}
\newcommand{\dwidehat}[1]{%
    \settoheight{\dhatheight}{\ensuremath{\widehat{#1}}}%
    \addtolength{\dhatheight}{-0.45ex}%
    \widehat{\vphantom{\rule{1pt}{\dhatheight}}%
    \smash{\widehat{#1}}}}
\newcommand{\dhat}[1]{%
    \settoheight{\dhatheight}{\ensuremath{\hat{#1}}}%
    \addtolength{\dhatheight}{-0.35ex}%
    \hat{\vphantom{\rule{1pt}{\dhatheight}}%
    \smash{\hat{#1}}}}

\newcommand{\dwh}[1]{\dwidehat{#1}}


\newcommand{\g}{{\frak g}}
\newcommand{\gc}{{\bar{\g'}}}
\newcommand{\h}{{\frak h}}
\newcommand{\cent}{{\frak c}}
\newcommand{\notc}{{\not c}}
\newcommand{\Loop}{{\cal L}}
\newcommand{\G}{{\cal G}}
\newcommand{\al}{\alpha}
\newcommand{\alck}{\al^\vee}
\newcommand{\be}{\beta}
\newcommand{\beck}{\be^\vee}
\newcommand{\ssl}{{\mathfrak{sl}}}

\newcommand{\rtu}{{\xi}}
\newcommand{\period}{{N}}
\newcommand{\half}{{\frac{1}{2}}}
\newcommand{\quar}{{\frac{1}{4}}}
\newcommand{\oct}{{\frac{1}{8}}}
\newcommand{\hex}{{\frac{1}{16}}}
\newcommand{\reciprocal}[1]{{\frac{1}{#1}}}
\newcommand{\inverse}{^{-1}}
\newcommand{\SumInZm}[2]{\sum\limits_{{#1}\in\Z_{#2}}}
\newcommand{\uce}{{\mathfrak{uce}}}
\newcommand{\I}{\textbf{\textrm{i}}}


\newcommand{\orb}[1]{|\mathcal{O}({#1})|}
\newcommand{\up}{_{(p)}}
\newcommand{\uq}{_{(q)}}
\newcommand{\upq}{_{(p+q)}}
\newcommand{\uz}{_{(0)}}
\newcommand{\uk}{_{(k)}}
\newcommand{\nsum}{\SumInZm{n}{\period}}
\newcommand{\ksum}{\SumInZm{k}{\period}}
\newcommand{\overN}{\reciprocal{\period}}
\newcommand{\df}{\delta\left( \frac{\xi^{k}w}{z} \right)}
\newcommand{\dfl}{\delta\left( \frac{\xi^{\ell}w}{z} \right)}
\newcommand{\ddf}{\left(D\delta\right)\left( \frac{\xi^{k}w}{z} \right)}

\newcommand{\ldfn}[1]{{\left( \frac{1+\xi^{#1}w/z}{1-{\xi^{#1}w}/{z}} \right)}}
\newcommand{\rdfn}[1]{{\left( \frac{{\xi^{#1}w}/{z}+1}{{\xi^{#1}w}/{z}-1} \right)}}
\newcommand{\ldf}{{\ldfn{k}}}
\newcommand{\rdf}{{\rdfn{k}}}
\newcommand{\ldfl}{{\ldfn{\ell}}}
\newcommand{\rdfl}{{\rdfn{\ell}}}

\newcommand{\kprod}{{\prod\limits_{k\in\Z_N}}}
\newcommand{\lprod}{{\prod\limits_{\ell\in\Z_N}}}
\newcommand{\E}{{\mathcal{E}}}
\newcommand{\F}{{\mathcal{F}}}

\newcommand{\Etopo}{{\mathcal{E}_{\te{topo}}}}

\newcommand{\Ye}{{\mathcal{Y}_\E}}

\newcommand{\rh}{{{\bf h}}}
\newcommand{\rp}{{{\bf p}}}
\newcommand{\rrho}{{{\pmb \varrho}}}
\newcommand{\ral}{{{\pmb \al}}}

\newcommand{\comp}{{\mathfrak{comp}}}
\newcommand{\ctimes}{{\widehat{\boxtimes}}}
\newcommand{\ptimes}{{\widehat{\otimes}}}
\newcommand{\ptimeslt}{{
{}_{\te{t}}\ptimes
}}
\newcommand{\ptimesrt}{{\ptimes_{\te{t}} }}
\newcommand{\ttp}[1]{{
    {}_{{#1}}\ptimes
}}
\newcommand{\bigptimes}{{\widehat{\bigotimes}}}
\newcommand{\bigptimeslt}{{
{}_{\te{t}}\bigptimes
}}
\newcommand{\bigptimesrt}{{\bigptimes_{\te{t}} }}
\newcommand{\bigttp}[1]{{
    {}_{{#1}}\bigptimes
}}

\newcommand{\ot}{\otimes}

\newcommand{\affva}[1]{V_{\wh\g}\(#1,0\)}
\newcommand{\saffva}[1]{L_{\wh\g}\(#1,0\)}
\newcommand{\saffmod}[1]{L_{\wh\g}\(#1\)}

\newcommand{\tar}{{\mathcal{DY}}_0\(\mathfrak{gl}_{\ell+1}\)}
\newcommand{\U}{{\mathcal{U}}}
\newcommand{\htar}{\mathcal{DY}_\hbar\(A\)}
\newcommand{\hhtar}{\widetilde{\mathcal{DY}}_\hbar\(A\)}
\newcommand{\htarz}{\mathcal{DY}_0\(\mathfrak{gl}_{\ell+1}\)}
\newcommand{\hhtarz}{\widetilde{\mathcal{DY}}_0\(A\)}
\newcommand{\qhei}{{\U_q\left( \hat{\h}_\mu \right)}}
\newcommand{\qheip}{{\U_q\left( \hat{\h}^+_\mu \right)}}
\newcommand{\qhein}{{\U_q\left( \hat{\h}^-_\mu \right)}}
\newcommand{\symalg}{{S\left( \hat{\h}^-_\mu \right)}}
\newcommand{\n}{{\mathfrak{n}}}
\newcommand{\vac}{{{\bf 1}}}
\newcommand{\vtar}{{{
    \mathcal{V}_{\hbar,\tau}\left(\ell,0\right)
}}}

\newcommand{\qtar}{
    \U_q\(\wh\g_\mu\)}
\newcommand{\rk}{{\bf k}}
\newcommand{\hn}{\U_\hbar\(\wh{\mathfrak n}\)}
\newcommand{\hnt}{\U_\hbar^\eta\(\wh{\mathfrak n}_\mu\)}
\newcommand{\hnta}{\U_\hbar^{\eta'}\(\wh{\mathfrak n}_\mu\)}
\newcommand{\qn}{\U_q\(\wh{\mathfrak n}_\mu\)}
\newcommand{\hvat}{V_{\hbar}^\eta\(\wh{\mathfrak n}_\mu\)}
\newcommand{\whvat}{\wt V_{\hbar}^\eta\(\wh{\mathfrak n}_\mu\)}


\newcommand{\hctvs}[1]{Hausdorff complete linear topological vector space}
\newcommand{\hcta}[1]{Hausdorff complete linear topological algebra}
\newcommand{\ons}[1]{open neighborhood system}
\newcommand{\B}{\mathcal{B}}
\newcommand{\rx}{{\bf x}}
\newcommand{\re}{{\bf e}}
\newcommand{\rphi}{{\boldsymbol{ \phi}}}

\newcommand{\der}{\mathcal D}

\newcommand{\chg}{\check{\mathfrak g}}
\newcommand{\chh}{\check{\mathfrak h}}
\newcommand{\cha}{\check{a}}
\newcommand{\chb}{\check{b}}
\newcommand{\ka}{\mathfrak{a}}
\newcommand{\chka}{\check{\mathfrak{a}}}


\makeatletter
\renewcommand{\BibLabel}{%
    \Hy@raisedlink{\hyper@anchorstart{cite.\CurrentBib}\hyper@anchorend}%
    [\thebib]%
}
\@addtoreset{equation}{section}
\def\theequation{\thesection.\arabic{equation}}
\makeatother \makeatletter

\title[Twisted quantum affinizations]{Twisted quantum affinizations and their vertex representations}

\author{Fulin Chen$^1$}
\address{Department of Mathematics, Xiamen University,
 Xiamen, China 361005} \email{chenf@xmu.edu.cn }\thanks{$^1$Partially supported by NSF of China (No. 11501478)  and Fundamental Research Funds for the Central University (No. 20720150003).}

\author{Naihuan Jing$^2$}
\address{Department of Mathematics, North Carolina State University, Raleigh, NC 27695,
USA}
\email{jing@math.ncsu.edu}
\thanks{$^2$Partially supported by NSF of China (No. 11531004) and Simons Foundation (No. 523868).}

\author{Fei Kong$^3$}
\address{Department of Mathematics, South China University of Technology, Guangzhou, China 510640} \email{kfkfkfc@scut.edu.cn}
\thanks{$^3$Partially supported by China Postdoctoral Science Foundation (No. 2016M602454) and NSF of China (No. 11701183).}

 \author{Shaobin Tan$^4$}
 \address{Department of Mathematics, Xiamen University,
 Xiamen, China 361005} \email{tans@xmu.edu.cn}
 \thanks{$^4$Partially supported by NSF of China (Nos. 11471268 and 11531004).}

\subjclass[2010]{17B37, 17B10} \keywords{twisted quantum affinization, vertex representation}

\begin{abstract}
In this paper we generalize Drinfeld's twisted quantum affine algebras to construct twisted
quantum algebras for all simply-laced generalized Cartan matrices
and present their vertex representation realizations. 

\end{abstract}
\maketitle

\section{Introduction}

Quantum (twisted and untwisted) affine algebras are one of the most important subclasses of quantum Kac-Moody algebras.
They were first introduced by Drinfeld and Jimbo
in terms of the Chevalley generators and Serre relations. To classify their finite dimensional representations,
Drinfeld later gave new realization for the quantum affine algebra 
as analogue of the affine Kac-Moody algebras \cite{Dr-new}. 
Drinfeld realization has since played a fundamental role in quantum conformal field theory \cite{FJ-vr-qaffine, Fr, JimboM}.

Drinfeld's untwisted quantum affinization process had been extended to all symmetrizable quantum enveloping Kac-Moody algebras.
The first example  was given in \cite{GKV}, in which  the quantum affinization
$\U_q(\dwidehat{\ssl}_{\ell+1})$ of $\U_q(\widehat{\ssl}_{\ell+1})$ was  introduced.
Moreover, an additional parameter $p$ can be added in the quantum affinization process \cite{GKV} and then one gets a two parameter deformed algebra
$\U_{q,p}(\dwidehat{\ssl}_{\ell+1})$.
For the general quantum Kac-Moody algebras
$\U_q(\g)$, their untwisted quantum affinization  $\U_q(\wh\g)$ were defined by Jing \cite{J-KM}, Nakajima \cite{Naka-quiver}
and Hernandez \cite{He-representation-coprod-proof}.
It is notable 
that the representation theory of quantum toroidal algebras (untwisted quantum affinization of
untwisted quantum affine algebras) is very rich, see the survey \cite{He-total} for further comprehensive studies.

As we have mentioned above, for the purpose of giving a current algebra realization of twisted quantum affine algebras,
\cite{Dr-new} Drinfeld has also introduced twisted quantum affine algebras
associated to diagram automorphisms on finite dimensional simple Lie algebras. 
It is natural to ask whether twisted quantum affinization can be generalized to diagram automorphisms for the general
simply-laced Kac-Moody algebras?

The goal of this paper is to answer the above question in
the most general form. In Sect. 2, we first introduce a class of diagram automorphisms on
the simply-laced Kac-Moody algebra $\g$ with certain natural linking condition.
To such a diagram automorphism 
$\mu$,
 we define a new quantum algebra $\qtar$ for $\g$. 
When $\g$ is  of finite type, the algebra $\qtar$ is the twisted quantum affinization  introduced by Drinfeld;
when $\mu=1$, it coincides with the untwisted quantum affine algebra $\mathcal U_q(\hat{\g})$.

To show our quantum algebra $\qtar$ 
is nontrivial in general, the vertex representations of $\qtar$
 are presented in Sect.3 (see Theorem \ref{thm:vertex-repn}) and proved in Sect.4.
Our construction generalizes some of the important
techniques introduced by Lepowsky in studying twisted vertex operators \cite{Lep}, and
is a common generalization of those vertex representations obtained in \cite{FJ-vr-qaffine,J-inv,J-KM}.

Another motivation of this paper comes from the quantization theory of extended affine Lie algebras.
Recall that (the core of) extended affine Lie algebras form a relatively large family of Lie algebras, including
 finite dimensional simple Lie algebras, (twisted and untwisted) affine Lie algebras, toroidal Lie algebras, and quantum
torus Lie algebras $\dwidehat{\ssl}_{\ell+1}(\C_p)$ (see \cite{BGK},\cite{AABGP}, \cite{N2} and the references
therein). The quantization of these special extended affine Lie algebras, namely, quantum algebras of finite type, (twisted and untwisted) quantum affine algebras, quantum toroidal algebras and quantum algebras $\U_{q,p}(\dwidehat{\ssl}_{\ell+1})$,
have been intensively studied.
It was proved in \cite{ABP} that the core of a  nullity $2$ extended affine Lie algebra is either isomorphic to $\dwidehat{\ssl}_{\ell+1}(\C_p)$   or 
to the twisted affinization of an affine Lie algebra.
We believe that the twisted quantum affinization of quantum affine algebras introduced in this paper are the ``right" quantization of
the  twisted affinization of affine Lie algebras.
Therefore our twisted quantum affinization of the quantum affine algebra
also solves the question of 
quantizing the twisted affinization of the affine Lie algebra. This process is believed to be the
way how one can obtain quantization of all nullity $2$ extended affine Lie algebras.

Throughout this paper, the sets of integers, non-negative integers and complex numbers will be denoted respectively by $\Z$, $\N$,  and $\C$.
We let $q$ be a fixed generic nonzero complex number,
and for $n\in\Z$, set
$[n]_q=\frac{q^n-q^{-n}}{q-q\inverse}$.

\section{Twisted quantum affinizations}
In this section we introduce the notion of general twisted quantum affinizations associated to a class of automorphisms on simply-laced generalized Cartan matrices.

Let $\nu$ be a positive integer
and let $A=(a_{ij})_{i,j=1}^\nu$ be a simply-laced generalized Cartan matrix (GCM). By definition, a GCM is a $\nu\times\nu$-matrix such that
\[a_{ij}\in \Z,\ a_{ii}=2,\ i\neq j\Rightarrow a_{ij}=0,-1,\ a_{ij}=0\Leftrightarrow a_{ji}=0,\]
for $i,j=1,\cdots,\nu$.
An automorphism $\mu$ of $A$ is a permutation of the index set $I=\{1,2,\cdots,\nu\}$ such that
 $a_{ij}=a_{\mu(i)\mu(j)}$ for all $i,j\in I$. Assume that $\mu$ is of order $N$ and let $\Z_N=\Z/N\Z$ be the cyclic group of order $N$.
 For $i,j\in I$, set
\begin{align*}\label{gammaij}
\Gamma_{ij}=\{k\in \Z_N\mid a_{i\mu^k(j)}\ne 0\},\quad
\Gamma_{ij}^\pm=\{k\in \Z_N\mid \pm a_{i\mu^k(j)}>0\}.\end{align*}
Throughout this paper, we assume that $\mu$  satisfies  the following  linking condition
\begin{align*}
\te{(LC) }\quad \te{for every}\ i,j\in I\ \te{with}\ a_{ij}<0,\ \Gamma_{ij}^-\te{ is a subgroup of }\Z_N.\qquad\qquad\qquad
\end{align*}

The linking condition is a natural extension of the common properties shared by Dynkin diagrams of finite and affine types. 
\begin{lem} Assume that $A$ is of finite or affine type. Then every automorphism $\mu$ of $A$ satisfies the
condition (LC) except that $A$ is of type $A_{\ell}^{(1)}$, $\ell=2$ or $\ell \ge 4$ and $\mu$
is an order $\ell+1$ cyclic rotation of the Dynkin diagram. 
\end{lem}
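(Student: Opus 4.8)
The plan is to combine a reduction on the order of $\mu$ with the classification of diagram automorphisms of the finite and affine simply-laced types. Throughout, write $N$ for the order of $\mu$, and recall that simply-lacedness forces $a_{ij}\in\{0,-1\}$ for $i\neq j$, so that for adjacent $i,j$ (those with $a_{ij}<0$) the set $\Gamma_{ij}^-=\{k\in\Z_N\mid a_{i\mu^k(j)}<0\}$ always contains $0$. The first reduction is essentially free: if $N\le 2$, then a subset of $\Z_N$ containing $0$ is either $\{0\}$ or all of $\Z_N$, and both are subgroups, so (LC) holds automatically whenever $\mu$ has order at most $2$. Consequently only automorphisms of order $\ge 3$ need to be examined.

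Next I would enumerate the finite and affine simply-laced diagrams that admit an automorphism of order $\ge 3$. Inspecting the automorphism groups, these are exactly the finite diagram $D_4$ (triality, $\mathrm{Aut}=S_3$) together with the affine diagrams $A_\ell^{(1)}$ ($\mathrm{Aut}$ the dihedral group of the $(\ell+1)$-cycle), $D_4^{(1)}$ ($\mathrm{Aut}=S_4$), $D_\ell^{(1)}$ for $\ell\ge 5$ ($\mathrm{Aut}$ of order $8$, containing order-$4$ elements), and $E_6^{(1)}$ ($\mathrm{Aut}=S_3$); all remaining simply-laced finite and affine diagrams have automorphism group of exponent at most $2$ and are disposed of by the previous paragraph. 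For every \emph{non-cyclic} diagram on this list I would verify (LC) by a direct local computation. The diagrams $D_4$, $D_4^{(1)}$ and $E_6^{(1)}$ each have a $\mu$-fixed central vertex $z$, and every edge is either of the form $z$--$v$ or interior to an arm; for $z$--$v$ one finds $\Gamma_{zv}^-=\Gamma_{vz}^-=\Z_N$ (the neighbourhood of $z$ is a union of full $\mu$-orbits and $z$ is fixed), while for an arm-interior edge (only occurring for $E_6^{(1)}$) one has $\Gamma^-=\{0\}$. For $D_\ell^{(1)}$ with $\ell\ge 5$ the order-$4$ elements need a separate look, but here $\mu^2$ swaps the two leaves of each fork while fixing the central path pointwise, so every path-vertex has $\mu$-orbit of size $\le 2$ and every leaf has $\mu$-orbit of size $\le 4$; running through the fork edges and path edges one checks that each $\Gamma_{ij}^-$ equals $\{0\}$, the order-$2$ subgroup $\{0,N/2\}$, or $\Z_N$. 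In all these cases $\Gamma_{ij}^-$ is a subgroup, so (LC) holds.

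The crux is the cyclic case $A_\ell^{(1)}$, the $(\ell+1)$-cycle with vertices labelled by $\Z_{\ell+1}$ and $i$ adjacent to $i\pm 1$. Reflections have order $2$ and are already handled, so it remains to treat a rotation $\mu=\rho^m$, $\mu(i)=i+m$. For the adjacent pair $(i,i+1)$ one has $\mu^k(i+1)=i+1+mk$, which is adjacent to $i$ exactly when $mk\equiv 0$ or $mk\equiv -2\pmod{\ell+1}$. Since $k\mapsto mk$ is injective on $\Z_N$, the first congruence contributes only $k=0$ and the second contributes at most one further element $k_0$, so $\Gamma_{i,i+1}^-=\{0\}$ or $\{0,k_0\}$. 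A two-element set $\{0,k_0\}$ is a subgroup of $\Z_N$ if and only if $2k_0\equiv 0$, and tracing this back through $mk_0\equiv -2$ shows it can hold only when $\ell+1$ is small. In particular, for the full rotation ($m=1$, $N=\ell+1$) one gets $\Gamma_{i,i+1}^-=\{0,\ell-1\}$, which is a subgroup precisely when $(\ell+1)\mid 4$; hence it fails for $\ell=2$ and for $\ell\ge 4$ but holds for $\ell=3$ (where $\{0,2\}$ is a subgroup of $\Z_4$), exactly the exceptional list in the statement.

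I expect the cyclic bookkeeping to be the only real obstacle: the reductions and the non-cyclic checks are mechanical once the automorphism groups are written down, whereas in the $A_\ell^{(1)}$ case one must track the single extra solution $k_0$ of $mk\equiv -2$ and pin down exactly when $\{0,k_0\}$ closes up into a subgroup. The remaining care is simply to confirm that the two edges $(i,i\pm 1)$ at each vertex, together with the reflections, exhaust the adjacent pairs up to the rotation symmetry, so that no further configuration can spoil (LC).
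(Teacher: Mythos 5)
Your overall strategy --- reduce to automorphisms of order at least $3$, enumerate the simply-laced finite and affine diagrams admitting such automorphisms, and verify (LC) case by case --- is exactly the ``direct check'' that the paper's one-line proof appeals to, and your treatment of $D_4$, $D_4^{(1)}$, $E_6^{(1)}$ and of the order-$4$ automorphisms of $D_\ell^{(1)}$ is correct. The gap is in the cyclic case, which is the entire content of the lemma: for a general rotation $\mu=\rho^m$ you stop at ``tracing this back through $mk_0\equiv-2$ shows it can hold only when $\ell+1$ is small,'' compute only $m=1$, and then assert the outcome is ``exactly the exceptional list in the statement.'' That assertion does not follow, and in fact it is false. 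Put $d=\gcd(m,\ell+1)$ and $N=(\ell+1)/d$. If $d\ge3$, then $mk\equiv-2\pmod{\ell+1}$ has no solution, so $\Gamma_{i,i+1}^-=\{0\}$ is a subgroup. If $d\le2$, there is exactly one solution $k_0\neq0$, and since $k\mapsto mk$ is an isomorphism of $\Z_N$ onto the subgroup $d\Z_{\ell+1}$ of $\Z_{\ell+1}$, the set $\{0,k_0\}$ is a subgroup of $\Z_N$ if and only if $\{0,-2\}$ is a subgroup of $\Z_{\ell+1}$, i.e.\ if and only if $(\ell+1)\mid4$. Hence (LC) fails for \emph{every} rotation with $\gcd(m,\ell+1)\le2$ as soon as $\ell\neq3$, not just for the rotations of order $\ell+1$.

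Concretely, take $A_5^{(1)}$ with vertices labelled by $\Z_6$ and $\mu=\rho^2$, which has order $3$: then $\Gamma_{01}^-=\{k\in\Z_3\mid a_{0,1+2k}<0\}=\{0,2\}$, which is not a subgroup of $\Z_3$; the same happens for every even $\ell+1\ge6$ and any $m$ with $\gcd(m,\ell+1)=2$. So your proof cannot terminate as written: completing your own computation produces violations of (LC) by rotations that are not of order $\ell+1$, so either you must show these satisfy (LC) (they do not), or the exceptional list must be enlarged to all rotations $\rho^m$ with $\gcd(m,\ell+1)\le2$ (for $\ell=2$ or $\ell\ge4$). The vague ``$\ell+1$ is small'' sentence is precisely where this discrepancy between the computation and the statement's ``order $\ell+1$'' qualifier is hidden; it is the one step of your case analysis that cannot be made rigorous in the form claimed, and it shows that the lemma as literally phrased needs the broader exceptional list.
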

\begin{proof} The lemma is directly checked for all possible $A$ and
the diagram automorphisms of $A$ (see \cite[Chapter 4]{Kac-book} for details).
\end{proof}

For  $i,j\in I$, we set
\begin{align*}d_{ij}=\mathrm{Card}\ \Gamma_{ij}^-,\quad d_i=\mathrm{Card}\ \Gamma_{ii}^+.\end{align*}
Note that both $d_i$ and $d_j$ divide $d_{ij}$. Let $\xi$ be a fixed $N$-th primitive root of unity.

\begin{lem}\label{lem:s-i=2}
If $d_{ii}>0$, then one has $\prod_{k\in\Gamma_{ii}^-}\xi^k=-1$.
\end{lem}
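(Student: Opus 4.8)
The plan is to reduce the product to a single sign determined by the symmetry of the index set $\Gamma_{ii}^-$, and to use the linking condition only to pin down that sign. First I would record two elementary facts. Since $A$ is simply-laced, every off-diagonal entry lies in $\{0,-1\}$, so the axiom $a_{ij}=0\Leftrightarrow a_{ji}=0$ forces $a_{ij}=a_{ji}$; that is, $A$ is symmetric. Combined with $a_{\mu^{k}(i)\,i}=a_{i\,\mu^{-k}(i)}$, which follows from applying the automorphism $\mu^{-k}$, symmetry gives $k\in\Gamma_{ii}^-\Leftrightarrow -k\in\Gamma_{ii}^-$, i.e. $\Gamma_{ii}^-=-\Gamma_{ii}^-$. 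Moreover $0\notin\Gamma_{ii}^-$ because $a_{ii}=2>0$.

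These two facts already force $\prod_{k\in\Gamma_{ii}^-}\xi^k\in\{1,-1\}$. I would pair each $k\in\Gamma_{ii}^-$ with $-k\in\Gamma_{ii}^-$, so that $\xi^{k}\xi^{-k}=1$ for every pair with $k\ne -k$. The only elements of $\Z_N$ fixed by $k\mapsto -k$ solve $2k=0$, namely $k=0$ and, when $N$ is even, $k=N/2$; since $0\notin\Gamma_{ii}^-$, the product collapses to $\xi^{N/2}=-1$ if $N/2\in\Gamma_{ii}^-$ and to $1$ otherwise. Thus the statement is equivalent to showing that $N$ is even and $N/2\in\Gamma_{ii}^-$.

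To locate $N/2$ I would invoke the linking condition. Since $d_{ii}>0$, fix $k_0$ with $a_{i\mu^{k_0}(i)}<0$ and put $j=\mu^{k_0}(i)$; then $a_{ij}<0$, so by (LC) the set $G:=\Gamma_{ij}^-$ is a subgroup of $\Z_N$. Because $\mu^{k}(j)=\mu^{k+k_0}(i)$, one checks $\Gamma_{ij}^-=\Gamma_{ii}^- -k_0$, hence $\Gamma_{ii}^-=k_0+G$ is a single coset of a subgroup of order $d_{ii}$. Being symmetric and avoiding $0$, this coset is nontrivial, which forces $-2k_0\in G$ and $k_0\notin G$; thus the image of $k_0$ is the unique element of order two in the quotient $\Z_N/G$ (so in particular $N/d_{ii}$ is even). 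Reading this element back in $\Z_N$, together with $\sum_{g\in G}g\equiv N(d_{ii}-1)/2$, identifies $\sum_{k\in\Gamma_{ii}^-}k\pmod N$ and thereby decides whether $N/2\in\Gamma_{ii}^-$.

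The main obstacle is exactly this last step: showing $N/2\in\Gamma_{ii}^-$, equivalently that the coset $k_0+G$ passes through the order-two element $N/2$ of $\Z_N$. Symmetry alone only yields $\pm1$, so this is where the full force of the linking condition — propagated around the whole $\mu$-orbit of $i$ — has to be used, and I expect the bookkeeping of the coset $k_0+G$ against $G$ (in particular the parity of $d_{ii}=|G|$ that governs whether the order-two coset lifts to $N/2$ rather than to a genuine generator) to be the delicate point. Once that is settled, the pairing argument of the second paragraph immediately gives $\prod_{k\in\Gamma_{ii}^-}\xi^k=\xi^{N/2}=-1$, completing the proof.
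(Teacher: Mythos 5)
Your proposal is not a complete proof, and the step you defer to your final paragraph --- showing that $N$ is even and $N/2\in\Gamma_{ii}^-$ --- is not routine bookkeeping: it is the entire content of the lemma, and it cannot be established in the stated generality. Up to that point your argument is sound and in substance coincides with the paper's: (LC) applied to $j=\mu^{k_0}(i)$ together with $\Gamma_{ij}^-=\Gamma_{ii}^--k_0$ shows that $\Gamma_{ii}^-$ is a coset $k_0+G$ of a subgroup $G$ of order $d_{ii}$, and symmetry plus $0\notin\Gamma_{ii}^-$ pin down $G=\langle 2k_0\rangle$ and $N=2k_0d_{ii}$ (for $k_0$ minimal, as in the paper). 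But carrying out the bookkeeping you postponed refutes the step rather than proving it: writing $\Gamma_{ii}^-=\{(2p+1)k_0 : 0\le p<d_{ii}\}$, one gets
\[
\sum_{k\in\Gamma_{ii}^-}k\;=\;k_0\sum_{p=0}^{d_{ii}-1}(2p+1)\;=\;k_0d_{ii}^2\;=\;\tfrac{N}{2}\,d_{ii},
\qquad\text{hence}\qquad
\prod_{k\in\Gamma_{ii}^-}\xi^k=(-1)^{d_{ii}},
\]
i.e.\ $N/2\in\Gamma_{ii}^-$ if and only if $d_{ii}$ is odd. And (LC) does not force $d_{ii}$ to be odd: take $A$ of type $A_3^{(1)}$ (the $4$-cycle) and $\mu$ the order-$4$ rotation, which satisfies (LC) and is not among the exclusions of the paper's Lemma 2.1; there $\Gamma_{ii}^-=\{1,3\}\subset\Z_4$, $d_{ii}=2$, and $\prod_{k\in\Gamma_{ii}^-}\xi^k=\xi^4=1$. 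So the gap you flagged is genuine, and no strategy can close it.

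The comparison with the paper's own proof is instructive. The paper derives exactly the same coset picture and then asserts $\prod_{k\in\Gamma_{ii}^-}\xi^k=\xi^{k_0d_{ii}}$, which replaces the true exponent $k_0d_{ii}^2$ by $k_0d_{ii}$; the discrepancy $k_0d_{ii}(d_{ii}-1)=\tfrac{N}{2}(d_{ii}-1)$ is a multiple of $N$ only when $d_{ii}$ is odd. In other words, the place where you stopped is precisely where the published argument makes an unjustified (and, for even $d_{ii}$, false) simplification, so your instinct about the delicate point was exactly right. What is true for every $d_{ii}>0$, follows from the coset structure alone, and is what Proposition~\ref{prop:xydeltafunc} actually needs, is the representative-independent identity $\xi^{k_0d_{ii}}=\xi^{N/2}=-1$; equivalently, $\{\xi^k : k\in\Gamma_{ii}^-\}$ is exactly the set of roots of $X^{d_{ii}}=-1$, so that $\prod_{k\in\Gamma_{ii}^-}(z-\xi^kw)=z^{d_{ii}}+w^{d_{ii}}$. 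It is only the product of these roots, the quantity in the lemma, that equals $(-1)^{d_{ii}}$ rather than $-1$.
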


\begin{proof}
For $k\in\Z$, we set $\bar k=k+N\Z\in\Z_N$.
Let $k_0$ be the minimal positive integer such that $\bar {k_0}\in \Gamma_{ii}^-$.
We remark that $\Gamma_{i\mu^{k_0}(i)}^-$ is a subgroup of $\Z_N$ and $\mathrm{Card} \Gamma_{i\mu^{k_0}(i)}^-=\mathrm{Card} \Gamma_{ii}^-=d_{ii}$.
Therefore one gets that
\begin{align*}
\Gamma_{ii}^-=\set{\overline{k_0+pN/d_{ii}}}{p=0,\dots,d_{ii}-1}.
\end{align*}
Since $\overline{k_0-N/d_{ii}}\in\Gamma_{ii}^-$ and $\bar 0\not\in\Gamma_{ii}^-$, the minimality of $k_0$ forces that $k_0<N/d_{ii}$.
Moreover, it follows from $\overline{N-k_0}\in\Gamma_{ii}^-$ that $N-k_0=k_0+ (d_{ii}-1)N/d_{ii}$.
This implies that $d_{ii}=N/(2k_0)$ and hence
\begin{align*}
&\prod_{k\in\Gamma_{ii}^-}\xi^k=\xi^{k_0d_{ii}}=\xi^{k_0\frac{N}{2k_0}}=-1.
\end{align*}
\end{proof}

Let $(\h,\Pi,\Pi^\vee)$ be a realization of the generalized Cartan matrix $A$ (see \cite[Chapter 1]{Kac-book}). i.e.  $\h$ is a $(2\nu-\mathrm{rank}(A))$-dimensional $\C$-vector space,
$\Pi=\{\al_i\}_{i\in I}$ is a set of linearly independent elements in $\h^\ast$, $\Pi^\vee=\{\al_i^\vee\}_{i\in I}$ is
a set of linearly independent elements in $\h$ and $\al_j(\al_i^\vee)=a_{ij}$ for $i,j\in I$.
Let $\g=\g(A)$ be the Kac-Moody algebra associated to the quadruple $(A,\h,\Pi,\Pi^\vee)$.
It is known that $\mu$ can be lifted to an automorphism of $\g$ with order $N$, called a diagram automorphism (\cite{KW}).

Let $Q=\oplus_{i\in I}\Z\al_i$ be the root lattice of $\g$ equipped with the symmetric $\Z$-bilinear form $\<\cdot|\cdot\>$ such that
\begin{align}
\<\al_i|\al_j\>=a_{ij},\quad i,j\in I.
\end{align}
By definition one can view $\mu$ as an isometry of $Q$ such that $\mu(\al_i)=\al_{\mu(i)}$ for $i\in I$.

Let $z,w,z_1,z_2,z_3,\cdots $ be mutually commuting independent formal variables.
For any $i,j\in I$,  we introduce the polynomials
\begin{align*}
&F^\pm_{ij}(z,w)=\prod_{k\in \Gamma_{ij}}\left( z-\xi^{k}q^{\pm a_{i\mu^k(j)}}w \right),\\
&G^\pm_{ij}(z,w)=\prod_{k\in \Gamma_{ij}}\left( q^{\pm a_{i\mu^k(j)}}z-\xi^{k}w \right),
\end{align*}
and the formal series
\begin{align*}
&g_{ij}(z)=G^+_{ij}(1,z)/F^+_{ij}(1,z)=\prod_{k\in \Gamma_{ij}} \frac{ q^{a_{i\mu^k(j)}}-\xi^k z}{1-\xi^kq^{ a_{i\mu^k(j)}}z},
\end{align*}
which are expanded for $|z|<1$. For each $i\in I$, write $\mathcal O(i)$ for the $\mu$-orbit of $I$  contains $i$.
In the case that $a_{ij}<0$ and $i\notin\mathcal O(j)$, we also introduce the polynomial
\begin{align*}\label{pij}
p_{ij}^\pm(z,w)=
    \left(
        z^{d_{ii}}+q^{\mp d_{ii}}w^{d_{ii}}
    \right)
    \frac{
        q^{\pm 2d_{ij}}z^{d_{ij}}-w^{d_{ij}}
    }{
        q^{\pm 2d_i}z^{d_{i}}-w^{d_i}
    }.
\end{align*}
Furthermore, for each $i\in I$ with $d_{ii}>0$, we set
\begin{align*}
p_i(z_1,z_2,z_3)=&\left(q^{\mp \frac{3}{2}d_{ii}}z_{1}^{d_{ii}}
        -(q^{\frac{d_{ii}}{2}}+q^{-\frac{d_{ii}}{2}})
            z_{2}^{d_{ii}}
        +q^{\pm \frac{3}{2}d_{ii}}
            z_{3}^{d_{ii}}\right)\\
        &\times \prod_{1\le a<b\le 3}\frac{(z_{a}^{d_{ii}}-z_{b}^{d_{ii}})
        (z_{a}^{d_{ii}}-q^{\mp2d_{ii}}z_{b}^{d_{ii}})}
        {(z_{a}^{d_i}-z_{b}^{d_i})
            (z_{a}^{d_i}-q^{\mp2d_i}z_{b}^{d_i})}.\end{align*}

Now we introduce the 
general twisted quantum affinization algebra associated to the automorphism $\mu$ of $A$.
\begin{de}\label{de:tqaffine}
The twisted quantum affinization algebra $\qtar$ is a unital associative algebra
over $\C$
generated by
\begin{eqnarray}\label{eq:tqagenerators}
\set{k_\al,\ h_{i,m},\  x^\pm_{i,n},\ q^{\pm\half c}}
{
   \al\in Q, i\in I, m\in\Z\setminus\{0\},n\in\Z
},
\end{eqnarray}
subject to the following relations written in terms of generating functions in $z$
\begin{align*}
 &\phi_i^\pm(z)=k_{\pm \al_i}\, \te{exp}
    \left(
        \pm (q-q\inverse)\sum\limits_{\pm m> 0}h_{i,m}z^{-m}
    \right),\\
&x^\pm_i(z)=\sum\limits_{m\in \Z} x^\pm_{i,m}z^{-m}.
\end{align*}
The relations are ($i,j\in I$, $\al,\be\in Q$)
\begin{align*}
&\te{(Q0)  }&&x^\pm_{\mu(i)}(z)=x^\pm_i(\xi\inverse z),
    \quad \phi^\pm_{\mu(i)}(z)=\phi^\pm_i(\xi\inverse z),
    \quad k_{\mu(\al)}=k_\al,\\
&\te{(Q1)  }&&q^{\pm\half c}q^{\mp\half c}=1,\ \
    q^{\pm\half c} \te{ are central},\\
&\te{(Q2)  }&&k_\al k_{\be}=k_{\al+\be},\ k_0=1,\ [\phi_i^\pm(z),\phi_j^\pm(w)]=0=[k_\al,\phi^\pm_i(z)],\\
&\te{(Q3)  }&&k_\al x_i^\pm(z) k_{-\al}=q^{\pm\sum_{k\in \Z_N}\<\al|\al_{\mu^k(i)}\>}x_i^\pm(z),\\
&\te{(Q4) }&& \phi^+_i(z)\phi^-_j(w)=\phi^-_j(w)\phi^+_i(z)
    g_{ij}(q^c w/z)\inverse g_{ij}(q^{-c} w/z),\\
&\te{(Q5) }&&\phi^+_i(z)x^\pm_j(w)=x^\pm_j(w)\phi^+_i(z)
    g_{ij}(q^{\mp\half c}w/z)^{\pm 1}
    ,\\
&\te{(Q6) }&& \phi^-_i(z)x^\pm_j(w)=x^\pm_j(w)\phi^-_i(z)
    g_{ji}(q^{\mp\half c}z/w)^{\mp 1},\\
&\te{(Q7)  }&&[x_i^+(z),x_j^-(w)]
=\frac{1}{q_i-q_i\inverse}
    \ksum\delta_{i,\mu^k(j)}\\
 &&&  \quad\times \Bigg(
        \phi_i^+(q^{-\half c}z)\delta\left(
            \frac{q^c\xi^kw}{z}
        \right)
        -
        \phi_i^-(q^{\half c}z)\delta\left(
            \frac{q^{-c}\xi^kw}{z}
        \right)
    \Bigg)\\
&\te{(Q8)  }&&F^\pm_{ij}(z,w)x^\pm_i(z)x^\pm_j(w)=
    G^\pm_{ ij}(z,w)x^\pm_j(w)x^\pm_i(z),\\
&\te{(Q9)  }&&\sum_{\sigma\in S_{2}}\Big\{
    p_{ij}^\pm(z_{\sigma(1)},z_{\sigma(2)})\big(x_i^\pm(z_{\sigma(1)})x_i^\pm(z_{\sigma(2)})
    x_j^\pm(w)
   -[2]_{q^{d_{ij}}}
   x_i^\pm(z_{\sigma(1)})x_j^\pm(w)x_i^\pm(z_{\sigma(2)})\\
&&& \qquad\quad   +x_j^\pm(w)x_i^\pm(z_{\sigma(1)})x_i^\pm(z_{\sigma(2)})\big)
\Big\}\ =0,\quad
    \te{if }a_{ij}<0\ \te{and}\ i\notin\mathcal O(j),\\
&\te{(Q10) }&&
\sum_{\sigma\in S_3}\bigg\{p_i(z_{\sigma(1)},z_{\sigma(2)},z_{\sigma(3)})\,x_i^\pm(z_{\sigma(1)})x_i^\pm(z_{\sigma(2)})x_i^\pm(z_{\sigma(3)})
\bigg\}=0,\qquad
    \te{if }d_{ii}> 0,
\end{align*}
where $\delta(z)=\sum_{n\in\Z}z^n$ is the usual $\delta$-function.
\end{de}

\begin{rem} When $A$ is  of finite type, the algebra $\qtar$ was first introduced by Drinfeld (\cite{Dr-new})
 to give a current algebra realization for the twisted quantum affine algebras.
 When $\mu=1$, the algebra $\qtar$ was the untwisted quantum affinization of $\U_q(\g)$ studied in \cite{GKV,J-KM,Naka-quiver,He-representation-coprod-proof,He-drinfeld-coproduct}.
\end{rem}

\begin{rem}
The relations (Q4)-(Q6) can be rewritten in the term of  $h_{i,m}$, $i\in I, m\ne 0$ as follows (see \cite[Sect. 2]{FJ-vr-qaffine} and \cite[Props. 3.3, 3.8]{J-inv})
\begin{align*}
&\te{(Q4$'$)}&&[h_{i,m},h_{j,n}]=\delta_{m+n,0}\frac{1}{m}
    \ksum \xi^{mk}[ma_{i\mu^k(j)}]_q[m]_{q^c},\  m,n\ne 0,\\
&\te{(Q5$'$)}&&[h_{i,m},x_{j,n}^\pm]
=\pm \frac{1}{m}\ksum \xi^{mk}[ma_{i\mu^k(j)}]_qq^{\mp\half mc} x_{j,m+n}^\pm,\ m>0,n\in\Z,\\
&\te{(Q6$'$)}&&[h_{i,m},x_{j,n}^\pm]
=\pm \frac{1}{m}\ksum \xi^{mk}[ma_{i\mu^k(j)}]_qq^{\pm\half mc} x_{j,m+n}^\pm,\ m<0,n\in\Z,
\end{align*}
where $i,j\in I$ and
$[m]_{q^c}=\frac{q^c-q^{-c}}{q-q^{-1}}$.
\end{rem}

\begin{rem} Assume now that $A$ is of finite type.
Let $\check{I}$ be a representative set of $I$ under the action of $\mu$, and let
$\check{A}=(\check{a}_{ij})_{i,j\in \check{I}}$ be the $\mu$-folded matrix of $A$,
where
\begin{align*}
 \check{a}_{ij}=\frac{s_i}{d_i}\sum_{k\in\Z_N}a_{\mu^k(i),j}\quad \te{and}\quad s_i=3-\frac{1}{d_i}\sum_{k\in\Z_N}a_{\mu^k(i),i}.
\end{align*}
In  \cite{Da1,Da2}, Damiani proved that the Drinfeld's quantum Serre relation (Q9) can be replaced by the following quantum Serre relation
\begin{align*}
&\te{(Q9$'$) }\sum_{\sigma\in S_{1-\check{a}_{ij} }}
\sum_{r=0}^{1-\check{a}_{ij}}\choice{1-\check{a}_{ij} }{r}_{q^{\frac{d_i}{s_i}}}(-1)^r
    x_i^\pm(z_{\sigma(1)})\cdots x_i^\pm(z_{\sigma(r)})x_j^\pm(w)\\
&\qquad\qquad\times
    x_i^\pm(z_{\sigma(r+1)})\cdots x_i^\pm(z_{\sigma(1-\check{a}_{ij} )})=0,
    \qquad\te{if }\check{a}_{ij}<0.
\end{align*}
This is the crucial step in Damiani's proof on the isomorphism between Drinfeld-Jimbo's and Drinfeld's realizations of
twisted affine quantum algebras.

For the general case, the folded matrix $\check{A}$ may be not  a GCM again (and so
the relation (Q9\,$'$) may not exist).
It was proved in \cite{FSS} that $\check{A}$ is a GCM if $s_i\le 2$ for all $i\in I$.
In this case, it is expected that the  Serre relation (Q9) is equivalent to
the relation (Q9\,$'$) in $\qtar$. In view of Damiani's work, this is true if $\check{a}_{ij}\ge -3$ for all $i,j\in \check{I}$.
However, it seems that the method developed in \cite{Da1} cannot be applied directly to the cases  $\check{a}_{ij}\le -4$.
\end{rem}

\begin{rem} The quantum Serre relation (Q10) is new when $d_i\ne d_{ii}$ (or equivalently, when $s_i\ne 2$).
The correct function $p_i(z_1,z_2,z_3)$ is found in the computation of the Serre relation among quantum vertex operators (see Proposition \ref{prop:qserre2}).
\end{rem}

\begin{rem} In \cite{He-representation-coprod-proof}, the author proved a triangular decomposition of the untwisted
quantum affinization algebras, which plays a fundamental  role in their representation theory.
By the combinational identities proved in \cite[\S 3.3.3]{He-representation-coprod-proof}, one can prove that
the algebra $\qtar$ admits a similar triangular decomposition.
The details will be given in a forthcoming work, where the twisted quantum affinization of non-simply-laced quantum Kac-Moody
algebras are introduced and their structure theory is studied.
We want to empathize here that the Serre relation (Q9) is the ``correct '' one (rather than the relation (Q9\,$'$) that was
 needed in deducing the triangular decomposition of $\qtar$.
 \end{rem}

\section{Vertex representation of $\qtar$}\label{subset:def-of-fock-sp}
In this section we construct the vertex representation of $\qtar$.

Let $\qhei$ be the $\mu$-twisted quantum Heisenberg algebra associated to the root lattice $Q$.
It is the associative algebra generated by $\al_{i,m},$, $i\in I,0\ne m\in \Z$ and $q^{\pm c}$
subject to relations
\begin{align*} \te{(H0)}&\quad \al_{\mu(i),m}=\xi\al_{i,m},\\
\te{(H1)}&\quad [\al_{i,m},\al_{j,n}]=\delta_{m+n,0}\frac{1}{m}
    \ksum \xi^{mk}[m\< \al_i|\mu^k(\al_j)\>]_q[m]_{q^c},
\end{align*}
for $i,j\in I$ and $m,n\in \Z\setminus\{0\}$.
Let $\qhein$ be the subalgebra of $\qhei$ generated by the elements
$\{\al_{i,n}\,|\,i\in I,n<0\}$, which is
 isomorphic to the symmetric algebra $\symalg$ generated by the same elements.
We define a $\qhei$-action on $\symalg$ by the following rules
\begin{eqnarray*}
&&q^{\pm c}.1=q^{\pm 1},\quad \al_{i,-n}=\te{multiplication operator},\  i\in I, n>0,\\
&&\al_{i,n}=\te{annihilation operator subject to (H1)},\  i\in I, n>0.
\end{eqnarray*}
From now on, we take $\al_{i,n}$ ($i\in I$, $n\in\Z\setminus\{0\}$), $q^{\pm c}$
as operators on $\symalg$.
For each $i\in I$, we introduce the following fields on $\symalg$
\begin{align*}
&\al_i^\pm(z)=\sum\limits_{\pm m>0}\al_{i,m}z^{-m},\quad
E_\pm(\al_i,z)=\te{exp}\left(\mp\sum\limits_{m>0}
    \frac{\al_{i,\pm m}}{[m]_q}z^{\mp m}\right).
\end{align*}

We define a bilinear form on $Q$ as follows
\begin{eqnarray*}\label{eq:group-commutator}
C: Q\times Q\rightarrow \<\xi'\>,\quad (\al,\be)\mapsto \kprod(-\xi^{-k})^{\<\al|\mu^k(\be)\>}.
\end{eqnarray*}
 where $\xi'=(-1)^N\xi$.
Following \cite[Sect. 5]{Lep},
there is  a (unique up to isomorphism) central extension
\[\xymatrix{
  1 \ar[r] & \<\xi'\> \ar[r] & \hat{Q} \ar[r]^{-} & Q \ar[r] & 1 }\]
of $Q$ by the cyclic group $\<\xi'\>$ with the commutator map
\[aba^{-1}b^{-1}=C(\bar{a},\bar{b}),\quad a,b\in \hat{Q}.\]
We notice that the argument in \cite[Sect. 5]{Lep} remains valid for possibly degenerate lattices.
Thus,  as shown in \cite{Lep},  there is a lifting automorphism $\hat{\mu}$ on $\hat{Q}$  such that
\begin{eqnarray}\label{eq:lift-mu}
\hat{\mu}(a)=a\ \ \te{if }\mu(\bar{a})=\bar{a},\qquad  (\hat{\mu}(a))^-=\mu(\bar{a}),\ a\in \hat{Q}.
\end{eqnarray}

For each $\al\in Q$, write
\begin{align*}
\al_{(0)}=\al+\mu(\al)+\cdots+\mu^{N-1}(\al)\in Q.
\end{align*}
We denote by $Q_{(0)}$ the subgroup of $Q$ spanned by $\al_{(0)}$, $\al\in Q$.
In the remaining part of this section, we fix a $Q_{(0)}$-graded $\hat{Q}$-module $T=\oplus_{\gamma\in Q_{(0)}}T_\gamma$ on which the following
compatible conditions are satisfied
\begin{align}\label{eq:hatmuOnMod}
a.t\in T_{(\bar{a}+\be)_{(0)}},\quad \hat{\mu}(a).t
=\xi^{-\<\bar{a}_{(0)},\be\>-\<\bar{a}_{(0)}|\bar{a}\>/2}a.t,
\end{align}
for $a\in \hat{Q},\,t\in T_{\beta_{(0)}},\,\be\in Q$.
When $A$ is of finite type, such irreducible $\hat{Q}$-modules were classified and constructed explicitly in \cite[Sect. 6]{Lep}.
 We remark that such  $Q_{(0)}$-graded $\hat{Q}$-modules also exist for the general case.
For example, one can take $T$ to be the induced $\hat{Q}$-module $\C[\hat{Q}]\otimes_{\C[\hat{M}]}\tau$
with the
$Q_{(0)}$-gradation given by
\[T_{\gamma}=\,\te{span}_\C\{a\otimes 1\mid\bar{a}_{(0)}=\gamma, a\in \hat{Q}\},\quad \gamma\in Q_{(0)}.\]
Here,  $\hat{M}$ stands for  the pulling back of $(1-\mu)Q$ in $\hat{Q}$, and $\tau$ stands for
the character  on $\hat{M}$ given by
\begin{eqnarray*}
\tau(\xi')=\xi',\quad  \tau(a\hat{\mu}a\inverse)=\xi^{-\<\bar{a}_{(0)}|\bar{a}\>/2},\quad a\in \hat{Q}.
\end{eqnarray*}
The fact that $\tau$ is a character follows from \eqref{eq:lift-mu}.

For  $\al\in Q$ and $0\ne c\in \C$,
we define the operators $z^{\al_{(0)}}\in \te{End}(T)[[z,z^{-1}]]$ and $c^{\al_{(0)}}\in \te{End}(T)$ as follows
\begin{eqnarray*}
z^{\al_{(0)}}.t=z^{\<\al_{(0)}|\be\>}t,\quad c^{\al_{(0)}}.t=c^{\<\al_{(0)}|\be\>}t, \quad t\in T_{\be_{(0)}}, \be\in Q.
\end{eqnarray*}
For convenience, we fix a section $e:\al\mapsto e_\alpha$ from $Q$ to $\hat{Q}$.

The generalized Fock space $F_T$ is defined as the tensor product of the $\qhei$-module $\symalg$ and the $\hat{Q}$-module $T$.
For each $i\in I$, let us introduce the following twisted vertex operators on the generalized Fock space $F_T$
\begin{eqnarray*}
&\Phi_i^\pm(z)=\sum_{m\ge 0}\Phi_{i,m}^\pm z^{-m}=q^{\pm\al_{i(0)}}\te{exp}\left(\pm (q-q\inverse)\al_i^\pm(z)\right),\\
&X_i^\pm(z)=\sum_{m\in \Z}X^\pm_{i,m}z^{-m}=E_-(\pm\al_i,q^{\mp\half}z)E_+(\pm\al_i,q^{\pm\half}z)e_{\pm\al_i}
    z^{\pm\al_{i(0)}+\<\al_{i(0)}|\al_i\>/2}.
\end{eqnarray*}
Here and henceforth, the operators  $\al_{i,n}, n\ne 0$, $e_{\al_i}$, $z^{\al_{i(0)}}$ and $c^{\al_{i(0)}} (c\in \C(q)^*)$  are always viewed
  as operators on  $F_T=\symalg\otimes T$ in the following natural way
  \[\al_{i,n}=\al_{i,n}\otimes 1,\,
  e_{\al_i}=1\otimes e_{\al_i},\, z^{\al_{i(0)}}=1\otimes z^{\al_{i(0)}},\, c^{\al_{i(0)}}=1\otimes c^{\al_{i(0)}}.\]

When $\mu=1$, the $\wh{Q}$-module $T$ can be taken to be the group algebra of $Q$. In this case, the vertex operators
$\Phi_i^\pm(z), X_i^\pm(z), i\in I$ were first introduced in \cite{FJ-vr-qaffine} for the case of finite type and then
in \cite{J-KM} for the case of general type.
It was proved in \cite[Theorem 3.1]{J-KM} that  these vertex operators provide a realization of the untwisted
quantum affinization algebras.

Just as the untwisted case, we have the following main result of this paper, whose proof will be given in the next section.
\begin{thm}\label{thm:vertex-repn}
The generalized Fock space $F_T$  affords a representation for the twisted quantum affinization algebra  $\qtar$ with actions given by
\begin{eqnarray*}
&&q^{\pm\half c}\mapsto q^{\pm\half},\quad
\phi_{i,m}^\pm\mapsto \Phi^\pm_{i,m},\quad
x^\pm_{i,m}\mapsto \epsilon_i X^\pm_{i,m},
\end{eqnarray*}
where $\ i\in I, m\in \Z$ and
\begin{equation}\label{eplsioni}
\epsilon_i=\begin{cases}\left(
    \frac{d_i\left[
        {d_i}
    \right]_q}{[2]_{q^{d_{ii}/2}}}
\right)^\half,\ &\te{if} \ d_{ii}>0,\\
\left(
    d_i\left[
        {d_i}
    \right]_q
\right)^\half,\ &\te{if}\ d_{ii}=0.
\end{cases}\end{equation}
\end{thm}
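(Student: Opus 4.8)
The plan is to verify directly that the assigned operators satisfy each defining relation (Q0)--(Q10) of $\qtar$ on the Fock space $F_T=\symalg\otimes T$. The computational backbone is a set of normal-ordering formulas for the twisted vertex operators $X_i^\pm(z)$ and $\Phi_i^\pm(z)$, derived from the Heisenberg relation (H1) via the elementary rule that reordering $E_+(\al_i,z)E_-(\al_j,w)$ into $E_-(\al_j,w)E_+(\al_i,z)$ produces a scalar rational function in $w/z$. I would first record once and for all: the Heisenberg contraction factors, which I expect to assemble into the rational functions $g_{ij}$ and $F^\pm_{ij}/G^\pm_{ij}$; and the contributions of the lattice operators $e_{\pm\al_i}$ and $z^{\pm\al_{i(0)}}$, which supply the two-cocycle $C(\cdot,\cdot)$ and the monomial prefactors, governed by the module conditions \eqref{eq:hatmuOnMod} and the character $\tau$.

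With these formulas in place the structural relations are routine. Relation (Q1) is immediate since $q^{\pm\half c}$ acts as the scalar $q^{\pm\half}$; (Q0) follows from (H0) together with the equivariance \eqref{eq:lift-mu} of $\hat\mu$; and (Q2), (Q3) follow from the mutual commutativity of the $\al_i^\pm(z)$ and from the grading action of $z^{\al_{i(0)}}$ and $e_{\al_i}$ on $T$. Relations (Q4)--(Q6) are then obtained by exponentiating their Heisenberg forms (Q4$'$)--(Q6$'$), which are read off from (H1) and from the bracket of $\al_{i,m}$ against $X_j^\pm(z)$.

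The first substantive relations are (Q7) and (Q8). For (Q8) I would compute both orderings $X_i^\pm(z)X_j^\pm(w)$ and $X_j^\pm(w)X_i^\pm(z)$ and check that their contraction factors differ by exactly the ratio $G^\pm_{ij}(z,w)/F^\pm_{ij}(z,w)$, giving (Q8) after clearing denominators; here the $\epsilon_i\epsilon_j$ prefactors cancel. For (Q7) I would form the two orderings of $X_i^+(z)$ and $X_j^-(w)$: their contraction is a rational function whose poles, on the support $i=\mu^k(j)$, lie at $w/z=q^{\mp c}\xi^{-k}$, and the difference of the two orderings localizes onto the $\delta$-functions appearing in (Q7). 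The normalization is then forced: $\epsilon_i^2$ (equal to $\epsilon_i\epsilon_j$ on that support) must absorb the residue so that the coefficient of the $\delta$-function is precisely $(q_i-q_i\inverse)\inverse\Phi_i^\pm$, and matching this residue is what produces \eqref{eplsioni}, the two cases $d_{ii}>0$ and $d_{ii}=0$ recording whether the self-adjacency of the orbit of $i$ contributes the extra factor $[2]_{q^{d_{ii}/2}}$.

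The main obstacle is the pair of quantum Serre relations (Q9) and especially (Q10). After factoring the common normal-ordered part out of the triple products $X_i^\pm(z_{\sigma(1)})X_i^\pm(z_{\sigma(2)})X_i^\pm(z_{\sigma(3)})$, relation (Q10) reduces to the rational-function identity that the $S_3$-symmetrization of $p_i(z_1,z_2,z_3)$ times the product of pairwise contraction factors vanishes; the polynomial $p_i$ has been engineered precisely so that this cancellation occurs, and establishing it --- together with the analogous $S_2$-identity governed by $p_{ij}^\pm$ for (Q9) --- is the delicate heart of the argument, which I would isolate as a separate statement (the one recorded as Proposition~\ref{prop:qserre2}). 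I expect the twisted case $d_i\ne d_{ii}$ to demand the most care: there the denominators $z_a^{d_i}-z_b^{d_i}$ differ from the $d_{ii}$-powers in the numerators, so one must first verify that the apparent poles of $p_i$ against the contraction factors genuinely cancel before the symmetric-group sum can be evaluated.
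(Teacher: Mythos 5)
Your proposal follows essentially the same route as the paper's own proof: normal-ordering/contraction formulas derived from (H1) and the lattice cocycle, delta-function localization of $[X_i^+(z),X_j^-(w)]$ with the normalization $\epsilon_i$ forced by residue matching (split into the cases $d_{ii}>0$ and $d_{ii}=0$), the contraction-ratio argument for (Q8), and reduction of the Serre relations (Q9)--(Q10) to symmetrized rational-function identities isolated as separate propositions --- exactly the paper's Lemmas \ref{lem:alcom}--\ref{lem:serreRelation0000} and Propositions \ref{prop:xydeltafunc}, \ref{prop:qcartanxytoh}, \ref{prop:qserre1}, \ref{prop:qserre2}. The plan is sound and complete in structure; the paper discharges the two symmetrization identities you flag as the delicate heart via a direct verification (Lemma \ref{lem:ps0}) and by citing the identity (4.44) of \cite{J-inv}, respectively.
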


\section{Proof of Theorem \ref{thm:vertex-repn}}
This section is devoted to prove Theorem \ref{thm:vertex-repn}.
For every $a\in \Z$, recall the following notation introduced in \cite{J-Z-alg}
\begin{eqnarray}\label{eq:q2notion}
&&(1-z)_{q^2}^a=\frac{
\prod\limits_{n\geq 0}(1-q^{-a+1+2n}z)
}{
\prod\limits_{n\geq 0}(1-q^{a+1+2n}z)
}.
\end{eqnarray}
We will also write $(z-w)_{q^2}^a=z^{a}(1-w/z)_{q^2}^a$.
The notation ``$(1-z)_{q^2}^a$" is a $q$-deformation of $(1-z)^a$ in the following sense.
\begin{lem}\label{lem:qibinormal}
For every $a\in\Z$, one has
\begin{eqnarray*}\label{eq:qibinormal}
-\sum\limits_{m>0}\frac{1}{m}[a]_{q^m}z^{m}=\te{log}(1-z)_{q^2}^a.
\end{eqnarray*}
\end{lem}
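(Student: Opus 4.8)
The plan is to prove the identity by taking logarithms in the defining quotient \eqref{eq:q2notion} and expanding both sides as power series in $z$. First I would use that the logarithm of a product is the sum of logarithms to write
\[
\log(1-z)_{q^2}^a=\sum_{n\geq 0}\log\!\left(1-q^{-a+1+2n}z\right)-\sum_{n\geq 0}\log\!\left(1-q^{a+1+2n}z\right),
\]
which is legitimate once we restrict to the regime $|q|<1$, in which both infinite products converge and define nonvanishing analytic functions near $z=0$.

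Next I would apply the Taylor expansion $\log(1-x)=-\sum_{m>0}x^m/m$ to each factor and interchange the order of summation in $m$ and $n$; this is justified by absolute convergence for $|q|<1$ and $|z|$ small. Collecting the coefficient of $z^m/m$ gives
\[
\log(1-z)_{q^2}^a=-\sum_{m>0}\frac{z^m}{m}\left(\sum_{n\geq 0}q^{m(-a+1+2n)}-\sum_{n\geq 0}q^{m(a+1+2n)}\right).
\]
The two inner sums are geometric series in $n$ with common ratio $q^{2m}$, so each equals $q^{m(1\mp a)}/(1-q^{2m})$, and their difference simplifies to
\[
\frac{q^{m(1-a)}-q^{m(1+a)}}{1-q^{2m}}=\frac{q^{-ma}-q^{ma}}{q^{-m}-q^m}=\frac{q^{ma}-q^{-ma}}{q^m-q^{-m}}=[a]_{q^m},
\]
after cancelling the common factor $q^m$ from numerator and denominator. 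Substituting back yields exactly $-\sum_{m>0}\frac1m[a]_{q^m}z^m$, as claimed. Note that the shift by $a$ only rescales each geometric series by a constant prefactor, so nothing in the argument uses $a\geq 0$ and the computation is uniform in $a\in\Z$.

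This computation is essentially routine; the only points requiring genuine care are the interchange of the two summations and the evaluation of the geometric series, both of which force us to work in the convergence regime $|q|<1$. This is harmless since $q$ is generic: the resulting equality is an identity of power series in $z$ whose coefficients are rational functions of $q$, and hence holds in full generality. The final algebraic collapse to $[a]_{q^m}$ is then a one-line simplification, so I expect no real obstacle.
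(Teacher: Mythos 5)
Your proposal is correct and is essentially the paper's own proof run in reverse: the paper starts from $\sum_{m>0}\frac{1}{m}[a]_{q^m}z^m$, expands $[a]_{q^m}=\sum_{n\geq 0}\bigl(q^{(-a+1+2n)m}-q^{(a+1+2n)m}\bigr)$, interchanges the sums and resums over $m$ to produce the logarithms, whereas you start from $\log(1-z)_{q^2}^a$, expand each logarithm, interchange the sums and resum the geometric series in $n$ to recover $[a]_{q^m}$ --- the same two ingredients and the same interchange, just read in the opposite direction. Your extra care about the regime $|q|<1$ and the rational-function-coefficients argument is a harmless refinement of what the paper leaves implicit, not a different method.
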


\begin{proof}
It follows from the fact $[a]_q=\sum\limits_{n\geq 0}\left(q^{-a+1+2n}- q^{a+1+2n} \right)$
that
\begin{eqnarray*}
&&\sum\limits_{m>0}\frac{1}{m}[a]_{q^m}z^{m}
=\sum\limits_{m>0}\frac{1}{m}\sum\limits_{n\geq 0}\left(q^{(-a+1+2n)m}- q^{(a+1+2n)m} \right)z^{m}\nonumber\\
&&\quad=-\sum\limits_{n\geq 0}\left(
    \te{log}\left(1-q^{-a+1+2n}z\right)
    -
    \te{log}\left(1-q^{a+1+2n}z\right)
\right)\nonumber\\
&&\quad=-\te{log}
\left(
    \frac{
        \prod\limits_{n\geq 0}\left(1-q^{-a+1+2n}z\right)
    }{
        \prod\limits_{n\geq 0}\left(1-q^{a+1+2n}z\right)
    }
\right)
=-\te{log}(1-z)_{q^2}^a.
\end{eqnarray*}
\end{proof}

We first give some elementary properties about the operators $E_\pm(\al_i,z)$.
\begin{lem}\label{lem:alcom}
For any $i,j\in I$, one has

$\te{(1) }\al_{\mu(i)}^\pm(z)=\al_i^\pm(\xi\inverse z),\ E_\pm(\al_{\mu(i)},z)=E_\pm(\al_i,\xi\inverse z)$,

$\te{(2) }[\al_i^+(z),E_-(\al_j,w)]=\frac{1}{q-q\inverse}\te{log}
    \left(q^{-\<\sum\al_{\mu^p(i)}|\al_j\>}g_{ij}(w/z)\right)E_-(\al_j,w)$,

$\te{(3) }[\al_i^-(z),E_+(\al_j,w)]=\frac{1}{q-q\inverse}\te{log}
    \left(q^{-\<\sum\al_{\mu^p(j)|}\al_i\>}g_{ji}(z/w)\right)E_+(\al_j,w)$,

$\te{(4) } [E_\pm(\al_i,z), E_\pm(\al_j,w)]=0$,

$\te{(5) }E_+(\al_i,z)E_-(\al_j,w)=E_-(\al_j,w)E_+(\al_i,z)\kprod
\left(
    1-\frac{\xi^kw}{z}
\right)_{q^2}^{\<\al_i|\mu^k \al_j\>}.$\\
\end{lem}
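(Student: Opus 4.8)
The plan is to derive all five identities from the Heisenberg relations (H0) and (H1), keeping in mind that on $\symalg$ the central element acts by $q^{\pm c}.1=q^{\pm1}$, so that $[m]_{q^c}=[m]_q$ throughout. Parts (1) and (4) are immediate. For (1) I would substitute (H0) (the mode rescaling $\al_{\mu(i),m}=\xi^m\al_{i,m}$) into the defining series: it turns the generating series of $\al_{\mu(i)}$ into that of $\al_i$ evaluated at $\xi\inverse z$, and likewise for $E_\pm$. For (4), note that $E_+(\al_i,z)$ and $E_+(\al_j,w)$ involve only the modes $\al_{i,m},\al_{j,n}$ with $m,n>0$; these commute pairwise by (H1) (whose bracket is supported on $m+n=0$), so the exponentials commute, and the same holds for $E_-$.

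The core computation is part (5). Writing $E_+(\al_i,z)=\te{exp}(A)$ and $E_-(\al_j,w)=\te{exp}(B)$ with $A=-\sum_{m>0}\frac{\al_{i,m}}{[m]_q}z^{-m}$ and $B=\sum_{n>0}\frac{\al_{j,-n}}{[n]_q}w^{n}$, the bracket $[A,B]$ is a scalar by (H1), so $\te{exp}(A)\te{exp}(B)=\te{exp}(B)\te{exp}(A)\,\te{exp}([A,B])$. I would evaluate $[A,B]$ using (H1): only the terms with $m=n$ survive, and after cancelling one factor of $[m]_q$ via the elementary identity $[ma]_q=[m]_q[a]_{q^m}$ (valid for $a=\<\al_i|\mu^k\al_j\>\in\Z$) one gets $[A,B]=\sum_{k\in\Z_N}\bigl(-\sum_{m>0}\frac{1}{m}[\<\al_i|\mu^k\al_j\>]_{q^m}(\xi^k w/z)^m\bigr)$. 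Applying Lemma \ref{lem:qibinormal} termwise in $k$, with $a=\<\al_i|\mu^k\al_j\>$ and variable $\xi^k w/z$, converts each inner sum into $\te{log}(1-\xi^kw/z)_{q^2}^{\<\al_i|\mu^k\al_j\>}$, and exponentiating yields exactly the claimed product over $\Z_N$.

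For parts (2) and (3) I would first reduce the commutator with the exponential to a scalar: since $[\al_i^+(z),B]$ is central, $[\al_i^+(z),E_-(\al_j,w)]=[\al_i^+(z),B]\,E_-(\al_j,w)$, and similarly in (3). Evaluating $[\al_i^+(z),B]$ by (H1) (again keeping only $m=n$ and cancelling $[m]_q$) gives $\sum_{m>0}\frac{1}{m}(w/z)^m\sum_{k}\xi^{mk}[m\<\al_i|\mu^k\al_j\>]_q$. Independently I would expand $\te{log}\,g_{ij}(u)$: writing each factor as $q^{a_k}(1-q^{-a_k}\xi^k u)/(1-q^{a_k}\xi^ku)$ with $a_k=\<\al_i|\mu^k\al_j\>$, and using $\te{log}(1-x)=-\sum_{m>0}x^m/m$ together with $q^{ma_k}-q^{-ma_k}=(q-q\inverse)[ma_k]_q$, one finds $\te{log}\,g_{ij}(u)=\<\sum_p\al_{\mu^p(i)}|\al_j\>\,\te{log}\,q+(q-q\inverse)\sum_{m>0}\frac{1}{m}\sum_k\xi^{mk}[ma_k]_q u^m$, where the constant term uses $\sum_k a_k=\<\sum_p\al_{\mu^p(i)}|\al_j\>$. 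Matching at $u=w/z$ yields (2); part (3) is the mirror computation, in which the reindexing $k\mapsto -k$ converts $\<\al_j|\mu^k\al_i\>$ into $\<\al_i|\mu^{-k}\al_j\>$ and produces $g_{ji}(z/w)$ with the correct sign on the exponent of $c$.

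The step requiring the most care is this bookkeeping around $g_{ij}$: tracking the scalar prefactors $q^{a_k}$, which are exactly responsible for the correction factor $q^{-\<\sum_p\al_{\mu^p(i)}|\al_j\>}$, and, in part (3), the reindexing of the $\mu$-orbit sum so that $g_{ij}(w/z)$ becomes $g_{ji}(z/w)$. Everything else is the standard \emph{scalar commutator} normal-ordering argument, made uniform by Lemma \ref{lem:qibinormal} and the identity $[ma]_q=[m]_q[a]_{q^m}$.
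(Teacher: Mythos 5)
Your proposal is correct and follows essentially the same route as the paper's own proof, which likewise disposes of (1) directly, invokes the scalar-commutator identities $[A,e^B]=[A,B]e^B$ and $e^Ae^B=e^Be^Ae^{[A,B]}$ (valid when $[A,B]$ is central), and then obtains (2)--(5) from (H1) together with Lemma \ref{lem:qibinormal}; you simply carry out in detail the computations the paper leaves implicit. One small point in your favor: you read (H0) as $\al_{\mu(i),m}=\xi^{m}\al_{i,m}$, which is indeed the reading required for part (1) (the paper's displayed form of (H0), with a bare $\xi$, is a typo).
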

\begin{proof} The assertion (1) is clear. Note that, if $ [A,[A,B]]=0=[B,[A,B]]$, then
\begin{eqnarray}\label{eq:ecom}
Ae^B=[A,B]e^BA,\quad e^Ae^B=e^Be^Ae^{[A,B]}.
\end{eqnarray}
Then the assertions (2), (3), (4) and (5) follow from (H1), \eqref{eq:ecom}
and Lemma \ref{lem:qibinormal}.
\end{proof}

The following is easy to check.
\begin{lem}\label{lem:opecom} For every $\al,\beta\in Q$, one has
\[e_\al e_\be=\,C(\al,\be)e_\be e_\al,\
z^{\al_{(0)}} e_\be=\,z^{\<\al_{(0)}|\be\>}e_\be z^{\al_{(0)}},\
 c^{\al_{(0)}} e_\be=\,c^{\<\al_{(0)}|\be\>}e_\be c^{\al_{(0)}}.\]
\end{lem}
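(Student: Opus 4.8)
The plan is to verify the three identities directly: the first from the group structure of $\hat{Q}$, the other two by evaluating both sides on a homogeneous vector of $T$.

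For the first identity, recall that $e_\al,e_\be\in\hat Q$ satisfy $\overline{e_\al}=\al$ and $\overline{e_\be}=\be$, and that the central extension was fixed so that its commutator map reads $aba^{-1}b^{-1}=C(\bar a,\bar b)$. Applying this with $a=e_\al$, $b=e_\be$ gives $e_\al e_\be e_\al^{-1}e_\be^{-1}=C(\al,\be)$, and since $C(\al,\be)\in\<\xi'\>$ acts as a central scalar it can be moved to the front, yielding $e_\al e_\be=C(\al,\be)e_\be e_\al$ immediately.

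For the second and third identities, I would fix $\be,\gamma\in Q$ and test both sides on an arbitrary $t\in T_{\gamma_{(0)}}$. By the compatibility condition \eqref{eq:hatmuOnMod}, the action of $e_\be$ shifts the grading, $e_\be.t\in T_{(\be+\gamma)_{(0)}}$, because $\al\mapsto\al_{(0)}$ is additive. Hence, using the definition $z^{\al_{(0)}}.s=z^{\<\al_{(0)}|\delta\>}s$ for $s\in T_{\delta_{(0)}}$, the left-hand side gives $z^{\al_{(0)}}e_\be.t=z^{\<\al_{(0)}|\be+\gamma\>}e_\be.t$, while the right-hand side gives $z^{\<\al_{(0)}|\be\>}e_\be z^{\al_{(0)}}.t=z^{\<\al_{(0)}|\be\>}z^{\<\al_{(0)}|\gamma\>}e_\be.t$. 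These agree by the additivity $\<\al_{(0)}|\be+\gamma\>=\<\al_{(0)}|\be\>+\<\al_{(0)}|\gamma\>$ of the bilinear form in its second slot. The identity for $c^{\al_{(0)}}$ is obtained verbatim, replacing $z$ by $c$ throughout.

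The only point requiring a word of care, and the nearest thing to an obstacle, is checking that $z^{\al_{(0)}}$ is well defined on the graded piece $T_{\delta_{(0)}}$, i.e. that the exponent $\<\al_{(0)}|\delta\>$ depends only on $\delta_{(0)}$ and not on the representative $\delta$. This holds because $\al_{(0)}$ is $\mu$-invariant, so $\<\al_{(0)}|\mu^k(\delta)\>=\<\al_{(0)}|\delta\>$ for all $k$, whence $\<\al_{(0)}|\delta_{(0)}\>=N\<\al_{(0)}|\delta\>$ pins the exponent down from the grading alone. Once this is noted, the computation above is purely formal, confirming that the lemma is indeed easy to check.
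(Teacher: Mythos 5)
Your proof is correct, and it is essentially the paper's (unwritten) argument: the paper merely asserts the lemma "is easy to check," and your direct verification — the commutator map $aba^{-1}b^{-1}=C(\bar a,\bar b)$ of the central extension for the first identity, and evaluation on homogeneous vectors of $T$ via the compatibility condition \eqref{eq:hatmuOnMod} together with bilinearity for the other two — is exactly the intended routine computation. Your observation that the exponent $\<\al_{(0)}|\delta\>$ depends only on $\delta_{(0)}$ (because $\al_{(0)}$ is $\mu$-invariant and $\mu$ is an isometry) is a worthwhile well-definedness check that the paper's definition of $z^{\al_{(0)}}$ silently assumes.
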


The commutation relations between $\al_i^{\pm}(z)$ and  $X_i^\pm(z)$ are given in the following result.

\begin{lem}\label{lem:alxcom} For any $i,j\in I$, one has

$\te{(1) } X^\pm_{\mu(i)}(z)=X^\pm_i(\xi\inverse z),$

$\te{(2) }[\al_i^{+}(z),X^{\pm}_j(w)]=\pm \frac{1}{q-q\inverse}
    \te{log}
        \left(q^{-\<\sum\mu^p\al_i|\al_j\>}
        g_{ij}(q^{\mp\half}w/z)\right)
    X^\pm_j(w)$,

$\te{(3) }[\al_i^{-}(z),X^{\pm}_j(w)]=\pm \frac{1}{q-q\inverse}
    \te{log}
        \left(q^{-\<\sum\mu^p\al_i|\al_j\>}
            g_{ji}(q^{\mp\half}z/w)\right)
        X^{\pm}_j(w)$.
\end{lem}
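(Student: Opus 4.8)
The plan is to obtain all three identities from the structural commutation results in Lemma \ref{lem:alcom}, exploiting that $F_T=\symalg\otimes T$ splits the problem into a Heisenberg factor and a lattice factor.

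For (1) I would first apply Lemma \ref{lem:alcom}(1) to the two exponential factors of $X^\pm_{\mu(i)}(z)$, which turns $E_-(\pm\al_{\mu(i)},q^{\mp\half}z)E_+(\pm\al_{\mu(i)},q^{\pm\half}z)$ into $E_-(\pm\al_i,\xi\inverse q^{\mp\half}z)E_+(\pm\al_i,\xi\inverse q^{\pm\half}z)$, i.e.\ exactly the Heisenberg part of $X^\pm_i(\xi\inverse z)$. It then remains to match the zero-mode operator $e_{\pm\al_{\mu(i)}}z^{\pm\al_{\mu(i)(0)}+\<\al_{\mu(i)(0)}|\al_{\mu(i)}\>/2}$ with $e_{\pm\al_i}(\xi\inverse z)^{\pm\al_{i(0)}+\<\al_{i(0)}|\al_i\>/2}$. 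Since $\al_{i(0)}$ is $\mu$-invariant one has $\al_{\mu(i)(0)}=\al_{i(0)}$, and because $\mu$ is an isometry $\<\al_{\mu(i)(0)}|\al_{\mu(i)}\>=\<\al_{i(0)}|\al_i\>$; evaluating both operators on a homogeneous $t\in T_{\be_{(0)}}$ then reduces the claim to $e_{\pm\al_{\mu(i)}}.t=\xi^{\mp\<\al_{i(0)}|\be\>-\<\al_{i(0)}|\al_i\>/2}e_{\pm\al_i}.t$, which is precisely \eqref{eq:hatmuOnMod} applied to $a=e_{\pm\al_i}$, once \eqref{eq:lift-mu} (together with an $\hat\mu$-equivariant choice of the section $e$) is used to identify $\hat\mu(e_{\pm\al_i})$ with $e_{\pm\al_{\mu(i)}}$.

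For (2), the key observation is that $\al_i^+(z)$ acts only on $\symalg$ through the modes $\al_{i,m}$ with $m>0$: it commutes with the lattice operators $e_{\pm\al_j}$ and $w^{\pm\al_{j(0)}+\<\al_{j(0)}|\al_j\>/2}$, and by (H1) (there is no resonance $m+n=0$ with $m,n>0$) also with $E_+(\pm\al_j,q^{\pm\half}w)$. Hence $[\al_i^+(z),X^\pm_j(w)]=[\al_i^+(z),E_-(\pm\al_j,q^{\mp\half}w)]\,E_+(\pm\al_j,q^{\pm\half}w)e_{\pm\al_j}w^{\pm\al_{j(0)}+\<\al_{j(0)}|\al_j\>/2}$. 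Applying Lemma \ref{lem:alcom}(2) with $\al_j$ replaced by $\pm\al_j$ (the sign is released from the exponent $\pm\al_j$ in $E_-$) and $w$ replaced by $q^{\mp\half}w$ produces a central scalar times $E_-(\pm\al_j,q^{\mp\half}w)$, which recombines into $X^\pm_j(w)$ and yields the asserted formula, after noting $\<\sum_p\mu^p\al_i|\al_j\>=\<\al_{i(0)}|\al_j\>$.

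Part (3) is symmetric: $\al_i^-(z)$ involves the modes $\al_{i,m}$ with $m<0$, so it commutes with $E_-(\pm\al_j,q^{\mp\half}w)$ and with the lattice operators, giving $[\al_i^-(z),X^\pm_j(w)]=E_-(\pm\al_j,q^{\mp\half}w)[\al_i^-(z),E_+(\pm\al_j,q^{\pm\half}w)]e_{\pm\al_j}w^{\pm\al_{j(0)}+\<\al_{j(0)}|\al_j\>/2}$, and Lemma \ref{lem:alcom}(3) with $\al_j\mapsto\pm\al_j$, $w\mapsto q^{\pm\half}w$ then finishes it, using the isometry identity $\<\al_{j(0)}|\al_i\>=\<\al_{i(0)}|\al_j\>$. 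I expect the only delicate point to be the lattice bookkeeping in (1); once \eqref{eq:hatmuOnMod} is applied correctly, parts (2) and (3) are immediate from Lemma \ref{lem:alcom} and the elementary fact that the commutator of a Heisenberg field with an exponential of Heisenberg modes is a scalar multiple of that exponential.
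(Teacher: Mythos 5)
Your proposal is correct and takes essentially the same approach as the paper: part (1) from Lemma \ref{lem:alcom}(1) together with the consequence $\hat{\mu}(e_{\pm\al_i})=e_{\pm\al_i}\,\xi^{\mp\al_{i(0)}-\<\al_{i(0)}|\al_i\>/2}$ of \eqref{eq:hatmuOnMod}, and parts (2), (3) directly from Lemma \ref{lem:alcom}(2), (3) after splitting off the lattice and same-sign Heisenberg factors, exactly as you describe. If anything, you are more explicit than the paper's one-line proof on the only delicate point, namely that one must identify $e_{\pm\al_{\mu(i)}}$ with $\hat{\mu}(e_{\pm\al_i})$ (an $\hat{\mu}$-equivariant choice of the section $e$), which the paper leaves implicit.
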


\begin{proof} The assertion (1) follows from Lemma  \ref{lem:alcom} (1) and the fact that
\[\hat{\mu}(e_\al)=\,e_\al \xi^{-\al_{(0)}-\<\al_{(0)}|\al\>/2},\]
which can be deduced from \eqref{eq:hatmuOnMod}. The assertions
(2) and (3) are respectively implied by Lemma \ref{lem:alcom} (2) and (3).
\end{proof}

In the following we are going to check the quantum commutate relation (Q10).
For this purpose, we need to deduce  a $q$-analogue of the identity given in \cite[Proposition 4.1]{Lep} as follows.

\begin{prop}\label{prop:xydeltafunc} Let $i,j\in I$. Then
\begin{eqnarray*}
&&\kprod\left(
        1-\frac{\xi^kw}{z}
    \right)_{q^2}^{-\<\al_i|\mu^k\al_j\>}
-
\kprod\left(
        -\frac{\xi^kw}{z}+1
    \right)_{q^2}^{-\<\al_i|\mu^k\al_j\>}\\
&=&\left\{
\begin{array}{ll}
    \ksum\delta_{i,\mu^k(j)}
    \left(
    \frac{1+q^{-d_{ii}} }{d_i(1-q^{-2d_i})}
    \delta\left(
        \frac{
            q\xi^kw
        }{
            z
        }
    \right)
+
    \frac{1+q^{d_{ii}} }{d_i(1-q^{2d_i})}
    \delta\left(
        \frac{
            q\inverse\xi^kw
        }{
            z
        }
    \right)
    \right),&\te{if }d_{ii}>0,\\
    \ksum\delta_{i,\mu^k(j)}
    \left(
    \frac{1}{d_i(1-q^{-2d_i})}
    \delta\left(
        \frac{
            q\xi^kw
        }{
            z
        }
    \right)
+
    \frac{1}{d_i(1-q^{2d_i})}
    \delta\left(
        \frac{
            q\inverse\xi^kw
        }{
            z
        }
    \right)
    \right),&\te{if }d_{ii}=0.
\end{array}
    \right.
\end{eqnarray*}
\end{prop}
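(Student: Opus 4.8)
The plan is to read the left-hand side as the difference of the two regional expansions of a single rational function in $w/z$ and to evaluate the residue at each of its (simple) poles.

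First I would use that $A$ is simply-laced, so $\<\al_i|\mu^k\al_j\>=a_{i\mu^k(j)}\in\{2,0,-1\}$, equal to $2$ exactly when $k\in\Gamma_{ij}^+$ (i.e. $i=\mu^k(j)$) and to $-1$ exactly when $k\in\Gamma_{ij}^-$. Hence the product over $\Z_N$ collapses to the rational function
\[
F=\prod_{k\in\Gamma_{ij}^+}(1-\xi^k w/z)_{q^2}^{-2}\ \prod_{k\in\Gamma_{ij}^-}(1-\xi^k w/z),
\]
and the two terms of the statement are the expansions of $F$ in $|z|>|w|$ (the $(1-\,\cdot\,)$ form) and in $|w|>|z|$ (the $(-\,\cdot\,+1)$ form). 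If $i\notin\mathcal{O}(j)$ then $\Gamma_{ij}^+=\emptyset$, $F$ is a polynomial, the two expansions agree, and both sides vanish; so I may assume $i\in\mathcal{O}(j)$.

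Next I would locate the poles. Since $(1-u)_{q^2}^{-2}=\big((1-q^{-1}u)(1-qu)\big)^{-1}$, the function $F$ has poles only at $z=q^{-1}\xi^k w$ and $z=q\xi^k w$ for $k\in\Gamma_{ij}^+$; as $q$ is generic, $q^2$ is not a root of unity, so these poles are pairwise distinct and all simple. Using the partial fraction
\[
(1-u)_{q^2}^{-2}=\frac{1}{(1-q^2)(1-q^{-1}u)}+\frac{1}{(1-q^{-2})(1-qu)},
\]
the elementary fact that the two expansions of $1/(1-aw/z)$ differ by $\delta(aw/z)$, and the localization rule $\delta(aw/z)\,g(w/z)=g(a\inverse)\,\delta(aw/z)$, the difference of the two expansions of $F$ becomes a sum of delta functions indexed by the poles; the delta at $z=q\xi^{k_1}w$ carries the coefficient $(1-q^{-2})^{-1}$ times the value at $w/z=q^{-1}\xi^{-k_1}$ of all remaining factors of $F$.

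Finally I would evaluate that coefficient. Fix $k_1$ with $i=\mu^{k_1}(j)$; then $\Gamma_{ij}^+=k_1+H$ with $H=\<N/d_i\>$ the subgroup of order $d_i$, and $\Gamma_{ij}^-=k_1+\Gamma_{ii}^-$, both by the $\mu$-invariance of $A$. At $w/z=q^{-1}\xi^{-k_1}$ the surviving $\Gamma_{ij}^+$-factors give $\prod_{h\in H\setminus\{0\}}\big((1-q^{-2}\xi^h)(1-\xi^h)\big)^{-1}$, which by the cyclotomic identities $\prod_{h\in H\setminus\{0\}}(1-\xi^h)=d_i$ and $\prod_{h\in H}(1-q^{-2}\xi^h)=1-q^{-2d_i}$ equals $(1-q^{-2})/\big(d_i(1-q^{-2d_i})\big)$, while the $\Gamma_{ij}^-$-factors give $\prod_{s\in\Gamma_{ii}^-}(1-q^{-1}\xi^s)$. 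For the last product I would invoke the coset description of $\Gamma_{ii}^-$ obtained in the proof of Lemma \ref{lem:s-i=2}: it is a coset of the order-$d_{ii}$ subgroup with $\xi^{k_0 d_{ii}}=-1$, so $\{\xi^s\}_{s\in\Gamma_{ii}^-}$ is precisely the set of roots of $X^{d_{ii}}=-1$, whence $\prod_{s\in\Gamma_{ii}^-}(1-q^{-1}\xi^s)=1+q^{-d_{ii}}$ (and $=1$ if $d_{ii}=0$). Multiplying the three contributions by $(1-q^{-2})^{-1}$ produces the coefficient $(1+q^{-d_{ii}})/\big(d_i(1-q^{-2d_i})\big)$, independent of $k_1$. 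Since the poles are indexed by $k\in\Gamma_{ij}^+$ and $k\in\Gamma_{ij}^+\Leftrightarrow\delta_{i,\mu^k(j)}=1$, summation reproduces the $\delta(q\xi^k w/z)$-part of the right-hand side; the $\delta(q^{-1}\xi^k w/z)$-part follows by the symmetry $q\leftrightarrow q\inverse$.

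The hard part is this coefficient evaluation: one must recognize the residual products over $\Gamma_{ij}^+$ and $\Gamma_{ii}^-$ as cyclotomic products and collapse them. The decisive input is the coset structure of $\Gamma_{ii}^-$ supplied by the linking condition (LC) through Lemma \ref{lem:s-i=2}, giving $\xi^{k_0 d_{ii}}=-1$ and hence the factors $1+q^{\mp d_{ii}}$; without it this product would not telescope. The genericity of $q$ is the secondary ingredient, ensuring that all poles are simple and that the two expansions differ by delta functions alone.
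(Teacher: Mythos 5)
Your argument is correct and follows essentially the same route as the paper's own proof: both read the left-hand side as the difference of the two expansions of a single rational function whose poles are simple by genericity of $q$, compute the residue-type coefficients via cyclotomic products over the cosets $\Gamma_{ij}^{+}=k_1+\Gamma_{ii}^{+}$ and $\Gamma_{ij}^{-}=k_1+\Gamma_{ii}^{-}$, and use Lemma \ref{lem:s-i=2} (hence the condition (LC)) to collapse the $\Gamma_{ii}^{-}$-product into the factor $1+q^{\mp d_{ii}}$. The only differences are organizational: the paper first collapses the whole product into the closed form \eqref{eq:decom} and then invokes \cite[Proposition 2.8]{CGJT} for the expansion-difference-to-delta principle, whereas you keep the product in factored form and rederive that principle by partial fractions, arriving at the same coefficients.
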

\begin{proof}  For each $n\in \Z$, set $J(n)=\{k\in\Z_N\,|\,\<\al_i|\mu^k\al_j\>=n\}$.
If  $i\not\in\mathcal{O}(j)$, then $\Z_N=J(0)\cup J(-1)$ and the assertion is clear.
Suppose now that $i\in\mathcal{O}(j)$. Then $\Z_N=J(0)\cup J(-1)\cup J(2)$ with
\begin{align}\label{eq:J2}
J(2)=\{k\in\Z_N\,|\,\mu^k(j)=i\}.\end{align}
Furthermore, from Lemma \ref{lem:s-i=2}, we have that
 \begin{eqnarray}\label{eq:decom}
 \left(
        1-\frac{\xi^kw}{z}
    \right)_{q^2}^{-\<\al_i|\mu^k\al_j\>}=
    \left\{
    \begin{array}{ll}
    \displaystyle\frac{1+\frac{w^{d_{ii}}}{z^{d_{ii}}} }{
    \(1-q^{d_i}\frac{w^{d_i}}{z^{d_i}} \)\(1-q^{-d_i}\frac{w^{d_i}}{z^{d_i}} \)},
        &\te{if }d_{ii}>0,\\
    \displaystyle\frac{1}{\(1-q^{d_i}\frac{w^{d_i}}{z^{d_i}} \)
        \(1-q^{-d_i}\frac{w^{d_i}}{z^{d_i}} \)},
        &\te{if }d_{ii}=0.
    \end{array}\right.
 \end{eqnarray}


Take $k_0\in J(2)$.
 Then it follows from \eqref{eq:decom}  that
\begin{eqnarray}\label{eq:J-tttttt}
&&\left.\left(
        1-\frac{\xi^kw}{z}
    \right)_{q^2}^{-\<\al_i|\mu^k\al_j\>}(1-\frac{\xi^{k_0}q^{\pm 1}w}{z})
        \right|_{w=\xi^{-k_0}q^{\mp 1}z}\nonumber\\
   &=&
   (1+\xi^{-k_0d_{ii}}q^{\mp d_{ii}})
     (1-\xi^{-k_0d_i} q^{\mp 2 d_i})\inverse
    \lim\limits_{z\longrightarrow 1}\frac{1-z}{1-\xi^{-k_0d_i}z^{d_i}}\nonumber\\
   &=&
   (1+q^{\mp d_{ii}})
     (1-q^{\mp 2 d_i})\inverse
    \lim\limits_{z\longrightarrow 1}\frac{1-z}{1-z^{d_i}}\nonumber\\
&=&
\frac{1}{d_i}
    (1+q^{\mp d_{ii}})
    (1-q^{\mp 2d_i})\inverse,\qquad\te{if }d_{ii}>0,
\end{eqnarray}
and that
\begin{eqnarray}\label{eq:J-tttttt=0}
&&\left.\left(
        1-\frac{\xi^kw}{z}
    \right)_{q^2}^{-\<\al_i|\mu^k\al_j\>}(1-\frac{\xi^{k_0}q^{\pm 1}w}{z})
        \right|_{w=\xi^{-k_0}q^{\mp 1}z}\nonumber\\
   &=&
   (1-\xi^{-k_0d_i} q^{\mp 2 d_i})\inverse
    \lim\limits_{z\longrightarrow 1}\frac{1-z}{1-\xi^{-k_0d_i}z^{d_i}}\nonumber\\
   &=&
   (1-q^{\mp 2 d_i})\inverse
    \lim\limits_{z\longrightarrow 1}\frac{1-z}{1-z^{d_i}}\nonumber\\
&=&
\frac{1}{d_i}
    (1-q^{\mp 2d_i})\inverse,\qquad\te{if }d_{ii}=0.
\end{eqnarray}
Finally, the proposition follows from \eqref{eq:J2}, \eqref{eq:decom}, \eqref{eq:J-tttttt}, \eqref{eq:J-tttttt=0} and the next lemma.
\end{proof}

 \begin{lem}\cite[Proposition 2.8]{CGJT} Let $c_i$, $1\le i\le t$ be distinct nonzero
 complex numbers, and let $a_i\ge -1, 1\le i\le t$ be some integers.  Then one has
 \begin{align*}
 \prod^n_{i=1}\(1-\frac{c_iw}{z}\)^{a_i}-\prod^n_{i=1}\(-\frac{c_iw}{z}+1\)^{a_i}
 =\sum_{1\le i\le t;a_i=-1}\left(\prod_{1\le j\ne i\le t}(1-c_ic_j^{-1})^{a_j} \right)\delta\(\frac{c_iw}{z}\).
 \end{align*}
\end{lem}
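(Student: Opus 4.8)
The plan is to collapse everything to the single formal variable $x=w/z$, since each factor depends on $w$ and $z$ only through $w/z$. I regard $R(x)=\prod_{i=1}^{t}(1-c_ix)^{a_i}$ as a rational function of $x$, and I read the two products on the left-hand side as its two expansions: $\iota_+R$, in nonnegative powers of $x$, is the meaning of $\prod(1-c_iw/z)^{a_i}$, while $\iota_-R$, in nonpositive powers of $x$, is the meaning of $\prod(-c_iw/z+1)^{a_i}$ (each monomial $-c_ix$ being treated as dominant). The single building block I would record first is the classical identity $\iota_+\frac{1}{1-cx}-\iota_-\frac{1}{1-cx}=\sum_{n\in\Z}(cx)^n=\delta(cw/z)$, together with the localization rule $f(x)\,\delta(cw/z)=f(c^{-1})\,\delta(cw/z)$, valid for any one-sided series $f$ (here $c^{-1}$ is the support $w/z=c^{-1}$ of the delta function).

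Next I would exploit the hypothesis $a_i\ge-1$ decisively: it forces $R$ to have at worst simple poles, and these occur exactly at the distinct points $x=c_i^{-1}$ for the indices $i$ with $a_i=-1$. Thus the partial-fraction decomposition takes the form $R(x)=Q(x)+\sum_{i:\,a_i=-1}\frac{A_i}{1-c_ix}$ with $Q$ a genuine Laurent polynomial and $A_i=\lim_{x\to c_i^{-1}}(1-c_ix)R(x)=\prod_{j\ne i}(1-c_jx)^{a_j}\big|_{x=c_i^{-1}}$. Since $\iota_+$ and $\iota_-$ are algebra homomorphisms on the localization inverting the $1-c_ix$, and since they agree on the polynomial part $Q$, subtracting the two expansions annihilates $Q$ and applies the building-block identity to each simple-pole term. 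This yields $\iota_+R-\iota_-R=\sum_{i:\,a_i=-1}A_i\,\delta(c_iw/z)$, where $A_i$ is precisely the value of the remaining factors at the support $w/z=c_i^{-1}$, i.e.\ the coefficient asserted on the right-hand side.

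An equivalent route, which I would prefer for a completely self-contained write-up, is induction on $t$: peeling off the last factor $f_t=(1-c_tx)^{a_t}$ and setting $R'=\prod_{i<t}(1-c_ix)^{a_i}$, one writes $\iota_+R-\iota_-R=(\iota_+R')\big(\iota_+f_t-\iota_-f_t\big)+\big(\iota_+R'-\iota_-R'\big)(\iota_-f_t)$, then uses $\iota_+f_t-\iota_-f_t=\delta(c_tw/z)$ (nonzero only when $a_t=-1$) and the localization rule to collapse each delta, the distinctness of the $c_i$ guaranteeing that no collapsed factor sits at its own pole. I expect the \emph{only real obstacle} to be bookkeeping of well-definedness: one must check that every product of these one-sided expansions with a delta function is a legitimate formal distribution, and that the standing hypothesis $a_i\ge-1$ is exactly what keeps all poles simple — a pole of order $\ge 2$ would contribute derivatives $D\delta$ of the delta function, which are conspicuously absent from the stated right-hand side.
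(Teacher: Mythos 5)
There is no proof in the paper to compare against: the lemma is quoted from \cite[Proposition 2.8]{CGJT} without proof, so your argument supplies what the paper omits. Your primary route is correct and complete. Passing to $x=w/z$, the hypothesis $a_i\ge -1$ makes every pole of $R(x)=\prod_{i=1}^{t}(1-c_ix)^{a_i}$ simple; the partial-fraction decomposition $R=Q+\sum_{i:\,a_i=-1}A_i(1-c_ix)^{-1}$ holds with $Q$ a polynomial and the $A_i$ scalars; the expansion maps $\iota_{\pm}$ are ring homomorphisms into $\C((x))$ and $\C((x^{-1}))$ that agree on $Q$; and the difference of the two expansions of $(1-c_ix)^{-1}$ is $\delta(c_iw/z)$. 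This is the standard proof of such formal-variable identities, and it is rigorous precisely because the $A_i$ are computed at the level of rational functions, so you never multiply a $\delta$-function by an infinite series.

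Caveat one: your proof gives $A_i=\prod_{j\ne i}(1-c_jc_i^{-1})^{a_j}$, the remaining factors evaluated at the support $w/z=c_i^{-1}$ of $\delta(c_iw/z)$, whereas the statement prints $\prod_{j\ne i}(1-c_ic_j^{-1})^{a_j}$, with the ratio inverted. These are genuinely different numbers, and yours is the correct one: for $t=2$, $c_1=1$, $c_2=2$, $a_1=-1$, $a_2=1$, the two expansions of $(1-2x)/(1-x)$ differ by $-\delta(w/z)$, and the coefficient $-1$ equals $1-c_2c_1^{-1}$, not $1-c_1c_2^{-1}=1/2$. So the printed coefficient is a typo (as is the upper limit $n$ on the products, which should be $t$); the paper itself applies the lemma in your form, since \eqref{eq:J-tttttt} computes the coefficient of $\delta(q^{\pm 1}\xi^{k_0}w/z)$ by evaluating the remaining factors at $w=\xi^{-k_0}q^{\mp 1}z$, exactly your $A_i$. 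Still, you should have flagged the mismatch rather than closing with the identification of $A_i$ as ``the coefficient asserted on the right-hand side'': as written, that sentence silently equates two unequal expressions.

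Caveat two: the alternative induction you sketch is not rigorous as stated, and the obstacle is more than bookkeeping. Your localization rule $f(x)\delta(cw/z)=f(c^{-1})\delta(cw/z)$ holds for Laurent polynomials $f$, but for a genuinely infinite one-sided series neither $f(c^{-1})$ nor the product $f(x)\delta(cw/z)$ is defined --- their coefficients are infinite sums of complex numbers --- and the cross term $(\iota_+R')(\iota_-f_t)$ in your regrouping, a series in nonnegative powers of $x$ times a series in nonpositive powers, suffers the same defect. An honest inductive variant would multiply through by $\prod_{i:\,a_i=-1}(1-c_ix)$, observe that this polynomial annihilates $\iota_+R-\iota_-R$, and invoke a structure lemma for formal distributions annihilated by a polynomial with distinct nonzero roots. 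Your partial-fraction argument needs none of this; keep it as the proof.
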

For any $i,j\in \Z$, set
\begin{eqnarray*}
&&:X^+_i(z)X^-_j(w):
    =E_-(\al_i,q^{-\half}z)E_-(\al_j,q^{\half}w)
    E_+(\al_i,q^{\half}z)E_+(\al_j,q^{-\half}w)\nonumber\\
&&\quad\quad\times
    e_{\al_i}e_{-\al_j}
    z^{\al_{i(0)}}w^{-\al_{j(0)}}
    z^{\<\al_{i(0)}|\al_i-2\al_j\>/2}w^{\<\al_{j(0)}|\al_j\>/2}.
\end{eqnarray*}
Then one can conclude form Lemma \ref{lem:alcom} (5) and Lemma  \eqref{lem:opecom} that
\begin{align}
&\quad [X_i^+(z),X^-_j(w)]\label{eq:precomm}\\
=&:X^+_i(z)X^-_j(w):\left(\kprod\left(1-\frac{\xi^kw}{z}\right)_{q^2}^{-\<\al_i|\mu^k\al_j\>}
        -
        \kprod\left(-\frac{\xi^kw}{z}+1\right)_{q^2}^{-\<\al_i|\mu^k\al_j\>}
    \right).\nonumber
\end{align}

Recall the numbers $\epsilon_i$, $i\in I$ defined in \eqref{eplsioni}.
\begin{prop}\label{prop:qcartanxytoh}
Let $i,j\in I$. Then
\begin{eqnarray*}
[X^+_i(z),X^-_j(w)]=
   \ksum\delta_{i,\mu^k(j)}\epsilon_i^2
        \(\Phi^+_i(q^{-\half}z)
        \delta\left(
        \frac{
            q\xi^kw
        }{
            z
        }
    \right)
-
    \Phi^-_i(q^{\half}z)
    \delta\left(
        \frac{
            q\inverse\xi^kw
        }{
            z
        }\right)\).
\end{eqnarray*}
\end{prop}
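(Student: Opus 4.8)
The plan is to feed the delta-function identity of Proposition~\ref{prop:xydeltafunc} into the pre-commutation formula \eqref{eq:precomm} and then read off the coefficient of each delta function. This substitution rewrites $[X_i^+(z),X_j^-(w)]$ as $\ksum\delta_{i,\mu^k(j)}$ times the normal-ordered product $:X_i^+(z)X_j^-(w):$, multiplied on one hand by $\frac{1+q^{-d_{ii}}}{d_i(1-q^{-2d_i})}\delta\(\frac{q\xi^kw}{z}\)$ (with the obvious modification when $d_{ii}=0$) and on the other by the analogous $\delta\(\frac{q\inverse\xi^kw}{z}\)$ term. The whole problem is thus reduced to evaluating a single normal-ordered product against these two delta functions.

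First I would use the support of the delta functions to reduce everything to one variable. Since $\delta\(\frac{q\xi^kw}{z}\)$ is supported at $w=\xi^{-k}q\inverse z$ and $\delta\(\frac{q\inverse\xi^kw}{z}\)$ at $w=\xi^{-k}qz$, and since $\delta_{i,\mu^k(j)}$ restricts the sum to those $k$ with $\mu^k(j)=i$, Lemma~\ref{lem:alxcom}(1) lets me replace $X_j^-(w)$ by $X_i^-(\xi^kw)$; after the substitution $\xi^kw=q^{\mp1}z$ the product depends on $i$ and $z$ alone. The crucial feature of these two points is that they force the arguments of the two creation factors (resp. the two annihilation factors) of $:X_i^+(z)X_i^-(\xi^kw):$ to coincide.

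The key simplification is then that at each point one exponential pair collapses while the other telescopes. At $\xi^kw=q\inverse z$ the creation factors cancel, $E_-(\al_i,u)E_-(-\al_i,u)=1$, while the annihilation factors telescope into $\te{exp}\((q-q\inverse)\al_i^+(q^{-\half}z)\)$ by the identity $\frac{q^m-q^{-m}}{[m]_q}=q-q\inverse$; up to the zero mode $q^{\al_{i(0)}}$ this is exactly $\Phi_i^+(q^{-\half}z)$. Symmetrically, at $\xi^kw=qz$ the annihilation factors cancel and the creation factors assemble into $\Phi_i^-(q^{\half}z)$ up to $q^{-\al_{i(0)}}$, the reversed sign of the exponent being the origin of the minus sign in front of $\Phi_i^-$.

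It then remains to verify that the accumulated scalar and zero-mode factors combine into the constant $\epsilon_i^2$, and this is where the main work lies. Using Lemma~\ref{lem:opecom} I would move $z^{\al_{i(0)}}$ and $w^{-\al_{j(0)}}$ past $e_{\pm\al_i}$, evaluate the scalar $z$- and $w$-powers at the substitution point (contributing a factor $q^{-s/2}$ with $s=\<\al_{i(0)}|\al_i\>=2d_i-d_{ii}$), let the operator $z^{\al_{i(0)}}w^{-\al_{j(0)}}$ merge with the zero mode $q^{\pm\al_{i(0)}}$ of $\Phi_i^\pm$, and compute the action of $e_{\al_i}e_{-\al_j}$ on $T$ through the compatibility \eqref{eq:hatmuOnMod} and the lifting \eqref{eq:lift-mu} of $\hat{\mu}$. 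The hard part is the resulting bookkeeping: one must check that the genuine $q$-dependence of $\frac{1+q^{-d_{ii}}}{d_i(1-q^{-2d_i})}$ and of $q^{-s/2}$ collapses (using $1-q^{-2d_i}=q^{-d_i}(q-q\inverse)[d_i]_q$ and $1+q^{-d_{ii}}=q^{-d_{ii}/2}[2]_{q^{d_{ii}/2}}$), that the degree-dependent roots of unity produced by $\hat{\mu}$ cancel the residual $\xi$- and $q^{\pm\al_{i(0)}}$-operators so that a scalar multiple of $\Phi_i^\pm$ survives, and that the cocycle value of $e_{\al_i}e_{-\al_j}$ — governed by Lemma~\ref{lem:s-i=2} and the sign $\prod_{k\in\Gamma_{ii}^-}\xi^k=-1$ — supplies exactly the sign needed to land on $\epsilon_i^2$. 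This interplay between the signs in $1\pm q^{-d_{ii}}$ and the central-extension cocycle is the most delicate point of the verification.
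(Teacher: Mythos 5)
Your plan reproduces the paper's own proof almost step for step: substitute Proposition \ref{prop:xydeltafunc} into \eqref{eq:precomm}, reduce everything to evaluating $:X_i^+(z)X_j^-(w):$ at the two delta supports (this is exactly \eqref{eq:commutate0}), remove $\xi^k$ with Lemma \ref{lem:alxcom}(1), and note that at each support one exponential pair cancels while the other telescopes via $\frac{q^m-q^{-m}}{[m]_q}=q-q^{-1}$ into $\te{exp}\left(\mp(q-q^{-1})\al_i^\mp(q^{\pm\half}z)\right)$, i.e.\ $\Phi_i^\mp$ up to its zero mode; this is \eqref{eq:normalreplace} and the computation following it. Up to that point your argument is correct and identical to the paper's.

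The problem is in your final bookkeeping paragraph, and it is a genuine one because the proposition is precisely a statement about constants. You attribute the minus sign in front of $\Phi_i^-$ first to ``the reversed sign of the exponent'' and then to ``the cocycle value of $e_{\al_i}e_{-\al_j}$, governed by Lemma \ref{lem:s-i=2}.'' Neither is right. The reversed exponent is part of the definition of $\Phi_i^-$ itself, not a sign in the commutator; and the cocycle factor cannot produce a \emph{relative} sign between the two delta terms, because it sits inside the single normal-ordered product $:X_i^+(z)X_j^-(w):$ that multiplies both of them --- after the replacement $X_j^-(w)=X_i^-(\xi^kw)$ it is just $e_{\al_i}e_{-\al_i}$, a common scalar that the paper's conventions take to be $1$. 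Likewise, Lemma \ref{lem:s-i=2} has already been spent: it enters only through \eqref{eq:decom} inside the proof of Proposition \ref{prop:xydeltafunc}, which you invoked at the outset; and there are no ``residual $\xi$-operators'' left once Lemma \ref{lem:alxcom}(1) has been applied, nor is $q^{\pm\al_{i(0)}}$ to be cancelled --- it \emph{is} the zero mode of $\Phi_i^\pm$, as you yourself say correctly two paragraphs earlier. The actual source of the sign is elsewhere: both numerators in Proposition \ref{prop:xydeltafunc} are $1+q^{\mp d_{ii}}$ (there is no ``$1-q^{-d_{ii}}$'' term), and the relative sign comes from the denominators, since $1-q^{-2d_i}=+q^{-d_i}(q^{d_i}-q^{-d_i})$ while $1-q^{2d_i}=-q^{d_i}(q^{d_i}-q^{-d_i})$. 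Multiplying the two coefficients of Proposition \ref{prop:xydeltafunc} by the zero-mode scalars $q^{\mp s/2}$ (with $s=2d_i-d_{ii}$, as you computed) gives $\pm\,\frac{q^{d_{ii}/2}+q^{-d_{ii}/2}}{d_i\left(q^{d_i}-q^{-d_i}\right)}$ for the $\Phi_i^+$ and $\Phi_i^-$ terms respectively, and your two $q$-identities then finish the reduction. If instead you go looking for a compensating sign in the central extension, as your plan prescribes, the constant matching will come out wrong by a sign or be double-counted.
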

\begin{proof} We only prove the assertion for the case that $d_{ii}>0$.
The case of $d_{ii}=0$ is similar and omitted.
Now one can conclude from Proposition \ref{prop:xydeltafunc} and \eqref{eq:precomm} that
\begin{align*}
&[X_i^+(z),X^-_j(w)]=\sum_{k\in \Z}\delta_{i,\mu^k(j)}:X^+_i(z)X^-_j(w):
   \\
    &\times\left(
    \frac{ 1+q^{-d_{ii}} }{d_i(1-q^{-2d_i})}
    \delta\left(
        \frac{q\xi^kw}{  z  }
    \right)
+ \frac{ 1+q^{d_{ii}} }{d_i(1-q^{2d_i})}
    \delta\left(
        \frac{
            q\inverse\xi^kw}{ z }
    \right)
    \right).
    \end{align*}
Comparing the above identity with the one given in the proposition, it suffices to check that for each $k\in J(2)$, the following holds true
\begin{align}\label{eq:commutate0}\left.
\frac{1+q^{\pm d_{ii}}}{1-q^{\pm2d_i}}:X^+_i(z)X^-_j(w):\right|_{w=\xi^{-k}q^{\pm 1}z}
=\mp\frac{q^{d_{ii}/2}+q^{-d_{ii}/2}  }{q^{d_{i}}-q^{-d_{i}}}\Phi_i^\mp(q^{\mp\half}z).
\end{align}

Indeed, it follows from Lemma \ref{lem:alxcom} (1) that
\begin{align}\label{eq:normalreplace}
&:X^+_i(z)X^-_i(\xi^{-k}q^{\pm 1}z):=:X^+_i(z)X^-_i(q^{\pm 1}z):\nonumber\\
=&
q^{\mp\al_i(0)\pm \<\al_i(0)|\al_i\>/2}
E_\mp(\al_i,q^{\mp\half}z)E_\mp(-\al_i,q^{\pm\frac{3}{2}}z).
\end{align}
Notice that
\begin{eqnarray*}
&&E_\mp(\al,q^{\mp\half}z)E_\mp(-\al,q^{\pm\frac{3}{2}}z)
=\te{exp}\left(
    \pm\sum\limits_{n>0}\frac{\al(\mp n)}{[n]_q}z^{\pm n}
    \left(
        q^{-\half n}-q^{\frac{3}{2}n}
    \right)
\right)\\&=&
    \te{exp}\left(
    \pm\sum\limits_{n>0}\frac{\al(\mp n)}{[n]_q}(q^{\pm\half}z)^{\pm n}
    \left(
        q^{- n}-q^{n}
    \right)
\right)\quad=
\te{exp}\left(
    \mp(q-q\inverse)\al^\mp(q^{\pm\half}z)
\right).
\end{eqnarray*}
Combining this with \eqref{eq:normalreplace}, one immediately gets
\begin{eqnarray*}
:X^+_i(z)X^-_i(\xi^{-k}q^{\pm 1}z):
    =q^{\pm d_{i}\mp d_{ii}/2}\Phi^\mp_i(q^{\pm\half}z),
\end{eqnarray*}
which is  equivalent to \eqref{eq:commutate0}.
\end{proof}

The verification of the relations (Q8)-(Q10) needs the
notion of normal ordered product which we now explain as follows.
For any $i_1,i_2,\cdots,i_n\in I$, set
\begin{eqnarray*}\label{eq:defnormalordeing}
&&:X^\pm_{i_1}(z_1)X^\pm_{i_2}(z_2)\cdots X^\pm_{i_n}(z_n):
    =E_-(\pm\al_{i_1},q^{\mp\half}z_1)\cdots E_-(\pm\al_{i_n},q^{\mp\half}z_n)\\
&&E_+(\pm\al_{i_1},q^{\pm\half}z_1)\cdots E_+(\pm\al_{i_n},q^{\pm\half}z_n)
    e_{\pm \al_{i_1}}\cdots  e_{\pm \al_{i_n}}
    \prod\limits_{j=1}^nz_j^{\pm\al_{i_j(0)}+\<\al_{i_j(0)}|\sum_{k=1}^n\al_{i_k}\>/2}.\nonumber
\end{eqnarray*}
The following is the operator product expansion (OPE) for multiple vertex operators. 
\begin{lem}\label{lem:serreRelation0000} For each $n\in \Z$, we have
\begin{eqnarray*}
&&X^\pm_i(z_1)\cdots X^\pm_i(z_r)X^\pm_j(w)
    X^\pm_i(z_{r+1})\cdots X^\pm_i(z_n)
=:X^\pm_i(z_1)\cdots X^\pm_i(z_n)X^\pm_j(w):\\
 && \times \prod_{1\le s<t\le n} (z_sz_t)^{-\<\al_i(0)|\al_i+\al_j\>/2}
w^{-\<\al_i(0)|\al_j\>}\kprod\Bigg(\prod_{1\le s<t\le n}
    (z_s-q^{\mp 1}\xi^kz_t)_{q^2}^{\<\al_i|\mu^k\al_i\>}\\
&&\times \prod\limits_{1\leq s\leq r}
            (z_s-q^{\mp 1}\xi^kw)_{q^2}^{\<\al_i|\mu^k\al_j\>}
        \prod\limits_{s>r}
            (-\xi^kw+q^{\mp 1}z_s)_{q^2}^{\<\al_i|\mu^k\al_j\>}
    \Bigg).
\end{eqnarray*}
\end{lem}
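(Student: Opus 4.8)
The plan is to compute the left-hand side directly by bringing the product into normal-ordered form, pushing every annihilation factor $E_+$ to the right and every creation factor $E_-$ to the left, and then collecting the $e_{\pm\al}$ and $z^{\al_{i(0)}}$ operators to the appropriate ends, exactly as in the classical vertex operator OPE. The only commutation rules needed are Lemma \ref{lem:alcom}(4) (the $E_-$'s commute among themselves, as do the $E_+$'s), Lemma \ref{lem:alcom}(5) (the single nontrivial interchange $E_+(\pm\al_i,z)E_-(\pm\al_j,w)=E_-(\pm\al_j,w)E_+(\pm\al_i,z)\kprod(1-\xi^k w/z)_{q^2}^{\<\al_i|\mu^k\al_j\>}$), and Lemma \ref{lem:opecom} (the swaps of the $e_{\pm\al}$ and $z^{\al_{i(0)}}$ operators). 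The skeleton is therefore mechanical; the content lies entirely in the bookkeeping of the resulting scalar factors.

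First I would record, for each pair of operator slots, the scalar produced when one slot's $E_+$ crosses the other's $E_-$. For two $X_i$ factors at $z_s,z_t$ with $s<t$ the crossing has argument ratio $q^{\mp1}\xi^k z_t/z_s$ and exponent $\<\al_i|\mu^k\al_i\>$; rewriting via $(z-w)_{q^2}^a=z^a(1-w/z)_{q^2}^a$ yields the factor $(z_s-q^{\mp1}\xi^k z_t)_{q^2}^{\<\al_i|\mu^k\al_i\>}$ together with a stray power $z_s^{-\<\al_i|\mu^k\al_i\>}$. The pairs involving $X_j(w)$ split by position: for $s\le r$ the operator $X_i(z_s)$ stands to the left of $X_j(w)$, so its $E_+$ crosses the $E_-$ of $X_j$ and produces the factor built from $(z_s-q^{\mp1}\xi^k w)_{q^2}^{\<\al_i|\mu^k\al_j\>}$; for $s>r$ the roles are reversed and it is the $E_+$ of $X_j(w)$ that crosses the $E_-$ of $X_i(z_s)$, giving $\kprod(1-q^{\mp1}\xi^k z_s/w)_{q^2}^{\<\al_j|\mu^k\al_i\>}$.

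The delicate step, which I expect to be the main obstacle, is reconciling these $s>r$ factors with the stated form $(-\xi^k w+q^{\mp1}z_s)_{q^2}^{\<\al_i|\mu^k\al_j\>}$, and keeping expansion regions straight throughout. Two ingredients must be combined. First, one matches the exponent $\<\al_j|\mu^k\al_i\>$ against $\<\al_i|\mu^k\al_j\>$ using the symmetry of $\<\cdot|\cdot\>$ and the fact that $\mu$ is an isometry, after reindexing $k\mapsto-k$ inside $\kprod$. Second, placing the $e_{\pm\al_j}$ factor at the far right forces it past each $e_{\pm\al_i}$ with $s>r$, and each such move contributes a cocycle value $C(\pm\al_j,\pm\al_i)=\kprod(-\xi^{-k})^{\<\al_j|\mu^k\al_i\>}$ by Lemma \ref{lem:opecom}; the sign $(-1)^{\<\al_j|\mu^k\al_i\>}$ and the monomial in $\xi^{-k}$ carried by this cocycle are precisely what turn the series $(1-q^{\mp1}\xi^k z_s/w)_{q^2}$, expanded for $|w|>|z_s|$, into the asserted $(-\xi^k w+q^{\mp1}z_s)_{q^2}$. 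I would take care that the $s\le r$ factors remain series in $w/z_s$ while the $s>r$ factors are series in $z_s/w$, so that no spurious $\delta$-function contributions appear.

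Finally I would collect the leftover scalars into the global prefactor. The reordering of the charge operators $z_s^{\pm\al_{i(0)}+\<\al_{i(0)}|\al_i\>/2}$ past the $e_{\pm\al}$'s via Lemma \ref{lem:opecom}, the stray powers $z_s^{-\<\al_i|\mu^k\al_i\>}$ and $w^{-\<\cdot\>}$ left over from the $(1-z)_{q^2}\mapsto(z-w)_{q^2}$ rewritings, and the shift from the per-operator exponent $\<\al_{i(0)}|\al_i\>/2$ to the collective exponent $\<\al_{i(0)}|\sum_k\al_{i_k}\>/2$ built into the definition of the normal-ordered product, together assemble into $\prod_{1\le s<t\le n}(z_sz_t)^{-\<\al_{i(0)}|\al_i+\al_j\>/2}\,w^{-\<\al_{i(0)}|\al_j\>}$. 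Here I would use repeatedly that $\al_{i(0)}$ is $\mu$-fixed, so that $\<\al_{i(0)}|\mu^k\be\>=\<\al_{i(0)}|\be\>$ is independent of $k$; this collapses the $\kprod$ in all of these scalar powers and lets the exponents be read off directly, and it also identifies $\<\al_{i(0)}|\al_j\>$ with $\<\al_{j(0)}|\al_i\>$. Combining the three contributions yields the claimed identity, visibly independent of $r$ as it must be for a normal-ordered expression.
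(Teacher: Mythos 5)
Your proof is correct and takes essentially the same approach as the paper's: normal-order via Lemma \ref{lem:alcom}(5) and Lemma \ref{lem:opecom}, then convert the $s>r$ crossings using the cocycle identity $C(\al_j,\al_i)\kprod(w-\xi^k q^{\mp 1}z_s)_{q^2}^{\<\al_j|\mu^k\al_i\>}=\kprod(-\xi^k w+q^{\mp 1}z_s)_{q^2}^{\<\al_i|\mu^k\al_j\>}$, which is precisely the identity the paper isolates as the key step. The bookkeeping you describe (reindexing $k\mapsto-k$ via $\mu$-invariance of the form, the cocycle from moving $e_{\pm\al_j}$ to the far right, and tracking expansion regions) is exactly what that identity encodes.
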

\begin{proof} It follows from Lemma \ref{lem:alcom} (5) and Lemma \ref{lem:opecom} that
\begin{eqnarray*}
&&\qquad X^\pm_i(z_1)\cdots X^\pm_i(z_r)X^\pm_j(w)
    X^\pm_i(z_{r+1})\cdots X^\pm_i(z_n)\\
 &&=:X^\pm_i(z_1)\cdots X^\pm_i(z_r)X^\pm_j(w)
    X^\pm_i(z_{r+1})\cdots X^\pm_i(z_n):\prod_{1\le s<t\le n} (z_sz_t)^{-\<\al_i(0)|\al_i+\al_j\>/2}
    \\
 &&\qquad\times\prod_{1\le s\le n}(z_s w)^{-\<\al_i(0)|\al_j\>/2}
\kprod\Bigg(\prod_{1\le s<t\le n}
    (z_s-q^{\mp 1}\xi^kz_t)_{q^2}^{\<\al_i|\mu^k\al_i\>}\\
&&\qquad\times\prod\limits_{s\leq r}
            (z_s-q^{\mp 1}\xi^kw)_{q^2}^{\<\al_i|\mu^k\al_j\>}
        \prod\limits_{r<t}
            (w-\xi^kq^{\mp 1}z_t)_{q^2}^{\<\al_j|\mu^k\al_i\>}
    \Bigg).
\end{eqnarray*}
Then the lemma is proved by using Lemma \ref{lem:opecom} and the fact that
\[\ C(\al_j,\al_i)\kprod(w-\xi^kq^{\mp 1}z_t)_{q^2}^{\<\al_j|\mu^k\al_i\>}=\kprod
(-\xi^kw+q^{\mp 1}z_t)_{q^2}^{\<\al_i|\mu^k\al_j\>}\]
for $i, j\in I$.
\end{proof}

Using the previous lemma,
it is easy to see that the relation (Q8) holds true. Namely, one has the following result.
\begin{lem}\label{lem:qlocal}
$F^\pm_{ij}(z,w)X^\pm_i(z)X^\pm_j(w)
=G^\pm_{ij}(z,w)X^\pm_j(w)X^\pm_i(z).$
\end{lem}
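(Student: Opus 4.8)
The plan is to derive relation (Q8), i.e.\ Lemma~\ref{lem:qlocal}, directly from the operator product expansion in Lemma~\ref{lem:serreRelation0000}. First I would specialize that OPE to the case of two vertex operators by setting $n=1$ and treating the single $X^\pm_i$ factor together with the $X^\pm_j(w)$ factor. Concretely, I would write down $X^\pm_i(z)X^\pm_j(w)$ as the normal ordered product times the scalar factor coming from the $s=1$, $r=1$ branch of the product formula, and likewise $X^\pm_j(w)X^\pm_i(z)$ from the $r=0$ branch. The crucial observation is that both products equal the \emph{same} normal ordered product $:X^\pm_i(z)X^\pm_j(w):$ up to the commutation scalars, so forming the ratio eliminates the normal ordered piece entirely.

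The heart of the argument is then a purely algebraic identity between rational/series factors. After cancelling the common normal ordered product and the common monomial prefactors $(zw)^{-\<\al_{i(0)}|\al_i+\al_j\>/2}w^{-\<\al_{i(0)}|\al_j\>}$ type terms, the two orderings differ precisely by the $q$-deformed linear factors $\kprod(z-q^{\mp1}\xi^k w)_{q^2}^{\<\al_i|\mu^k\al_j\>}$ versus $\kprod(-\xi^k w+q^{\mp1}z)_{q^2}^{\<\al_i|\mu^k\al_j\>}$. I would next unpack the definition \eqref{eq:q2notion} of $(1-z)_{q^2}^a$ for the relevant exponents $a=\<\al_i|\mu^k\al_j\>=a_{i\mu^k(j)}\in\{0,-1,2\}$. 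For each $k\in\Gamma_{ij}$ the factor $(z-\xi^k q^{\mp1}w)_{q^2}^{a_{i\mu^k(j)}}$, once the infinite products in \eqref{eq:q2notion} are telescoped against the reversed ordering, collapses to the elementary linear factor $(z-\xi^k q^{\pm a_{i\mu^k(j)}}w)$ appearing in $F^\pm_{ij}(z,w)$, while the reversed ordering produces $(q^{\pm a_{i\mu^k(j)}}z-\xi^k w)$, which is exactly the factor in $G^\pm_{ij}(z,w)$.

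The main obstacle I anticipate is the bookkeeping in matching the $q$-shifts: one must verify that the half-integer powers $q^{\mp1/2}$ hidden inside the vertex operators $E_\pm(\pm\al_i,q^{\mp1/2}z)$ combine with the $q^2$-deformation in \eqref{eq:q2notion} to yield precisely the integer shifts $q^{\pm a_{i\mu^k(j)}}$ in $F^\pm_{ij}$ and $G^\pm_{ij}$, and that terms with $k\notin\Gamma_{ij}$ (where $a_{i\mu^k(j)}=0$, hence exponent $0$) contribute trivially and so are legitimately absent from the products defining $F^\pm_{ij}$ and $G^\pm_{ij}$. I would therefore isolate, for a single index $k$ with $a_{i\mu^k(j)}=-1$ and for one with $a_{i\mu^k(j)}=2$, the finite simplification of $(1-z)_{q^2}^{a}$ against its reversal, confirm that the ratio of forward to reversed factor equals $(z-\xi^k q^{\pm a_{i\mu^k(j)}}w)/(q^{\pm a_{i\mu^k(j)}}z-\xi^k w)$ up to a monomial in $z,w$, and then take the product over $k\in\Gamma_{ij}$. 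Cross-multiplying clears denominators and yields the stated relation $F^\pm_{ij}(z,w)X^\pm_i(z)X^\pm_j(w)=G^\pm_{ij}(z,w)X^\pm_j(w)X^\pm_i(z)$.
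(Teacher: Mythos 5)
Your strategy is exactly the one the paper uses: Lemma \ref{lem:qlocal} is deduced directly from Lemma \ref{lem:serreRelation0000} with $n=1$ and $r\in\{0,1\}$, since both orderings produce the same normal ordered product $:X_i^\pm(z)X_j^\pm(w):$ and the same monomial prefactor, so everything reduces to comparing $\kprod(z-q^{\mp 1}\xi^kw)_{q^2}^{\<\al_i|\mu^k\al_j\>}$ with $\kprod(-\xi^kw+q^{\mp 1}z)_{q^2}^{\<\al_i|\mu^k\al_j\>}$; your reduction to the exponents $a_{i\mu^k(j)}\in\{0,-1,2\}$ and the remark that $k\notin\Gamma_{ij}$ contributes a factor $1$ are also correct.

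However, the key identity you propose to ``confirm'' is stated backwards, and taken literally it would produce (Q8) with $F$ and $G$ interchanged, which is a false relation. Set $a_k=a_{i\mu^k(j)}$. What is actually true (exactly, with no monomial correction needed) is that the ratio of the forward factor to the reversed factor is the $G^\pm_{ij}$-factor over the $F^\pm_{ij}$-factor,
\begin{align*}
\frac{(z-q^{\mp 1}\xi^kw)_{q^2}^{a_k}}{(-\xi^kw+q^{\mp 1}z)_{q^2}^{a_k}}
=\frac{q^{\pm a_k}z-\xi^kw}{z-\xi^kq^{\pm a_k}w},
\end{align*}
the reciprocal of what you wrote. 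Concretely: for $a_k=-1$ the forward factor $(z-q^{\mp 1}\xi^kw)^{-1}$ is precisely the inverse of the $F^\pm_{ij}$-factor $(z-\xi^kq^{\mp 1}w)$ (note $q^{\pm a_k}=q^{\mp 1}$ here), and the reversed factor $(q^{\mp 1}z-\xi^kw)^{-1}$ is the inverse of the $G^\pm_{ij}$-factor, so multiplying by $F$ and $G$ respectively turns both sides into $1$; for $a_k=2$ (i.e.\ $\mu^k(j)=i$) one computes $(z-q^{\mp 1}\xi^kw)^{2}_{q^2}=(z-q^{\mp 2}\xi^kw)(z-\xi^kw)$ and $(-\xi^kw+q^{\mp 1}z)^{2}_{q^2}=(q^{\mp 2}z-\xi^kw)(z-\xi^kw)$, and the required equality of ($F$-factor)$\times$(forward factor) with ($G$-factor)$\times$(reversed factor) becomes
\begin{align*}
(z-q^{\pm 2}\xi^kw)(z-q^{\mp 2}\xi^kw)=(q^{\pm 2}z-\xi^kw)(q^{\mp 2}z-\xi^kw),
\end{align*}
which holds because both sides equal $z^2-(q^2+q^{-2})\xi^kzw+\xi^{2k}w^2$. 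Had your version of the ratio been correct, cross-multiplication would give $G^\pm_{ij}(z,w)X_i^\pm(z)X_j^\pm(w)=F^\pm_{ij}(z,w)X_j^\pm(w)X_i^\pm(z)$, which already fails in the untwisted rank-two case $a_{ij}=-1$ (the Frenkel--Jing relation has $F$ on the $X_iX_j$ side). With the direction corrected --- and with the $a_k=-1$ case handled by multiplying each geometric-series expansion by its vanishing linear form, which is exactly where the two expansion regions $|w/z|<1$ and $|z/w|<1$ get reconciled --- your argument goes through and coincides with the paper's proof.
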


Now we turn to the proof of the quantum Serre relations (Q9)-(Q10).
For the relation (Q9), we  start with the following identity.
\begin{lem}\label{lem:ps0} The following holds true
\begin{align*}\label{eq:ps0}
&(z_{\sigma(1)}-q^{-2}z_{\sigma(2)})\big((z_{\sigma(1)}-q^{-1}w)^{-1}(z_{\sigma(2)}-q^{-1}w)^{-1}
+(q+q^{-1})(z_{\sigma(1)}-q^{-1}w)^{-1}\\
&(w-q^{-1}z_{\sigma(2)})^{-1}+(w-q^{-1}z_{\sigma(1)})^{-1}(w-q^{-1}z_{\sigma(2)})^{-1}\big)-(z_1\leftrightarrow z_2)=0\notag.
\end{align*}
\end{lem}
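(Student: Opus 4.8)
The plan is to regard every factor $(z_{\sigma(a)}-q\inverse w)\inverse$ and $(w-q\inverse z_{\sigma(a)})\inverse$ as a genuine inverse in the field of rational functions $\C(q)(z_1,z_2,w)$. Since no $\delta$-function occurs in the statement, the asserted identity is purely algebraic, and it suffices to verify it there. Without loss of generality I take $\sigma=\te{id}$, so that the claim becomes the assertion that the bracketed sum, multiplied by the prefactor $z_1-q^{-2}z_2$, is invariant under $z_1\leftrightarrow z_2$. To streamline the computation I abbreviate $A=z_1-q\inverse w$, $B=z_2-q\inverse w$, $A'=w-q\inverse z_1$ and $B'=w-q\inverse z_2$, so that the three summands inside the bracket are $1/(AB)$, $(q+q\inverse)/(AB')$ and $1/(A'B')$.

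First I would place the bracket over the common denominator $ABA'B'$. The structural point driving the whole proof is that $ABA'B'$ is itself symmetric under $z_1\leftrightarrow z_2$, so all of the asymmetry of the expression is concentrated in the numerator together with the prefactor $z_1-q^{-2}z_2$. Writing the bracket as $\mathcal N/(ABA'B')$, one has $\mathcal N=A'B'+(q+q\inverse)A'B+AB$, and I would then expand $\mathcal N$ as a polynomial in $z_1,z_2,w$. The key cancellation is that both the $w^2$-coefficient and the $z_1z_2$-coefficient of $\mathcal N$ vanish, so that $\mathcal N$ collapses to $q\inverse w\big((q^{-2}-1)z_1+(q^2-1)z_2\big)$, which is linear in the $z$-variables.

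Multiplying by the prefactor gives the total numerator $P=q\inverse w\,(z_1-q^{-2}z_2)\big((q^{-2}-1)z_1+(q^2-1)z_2\big)$, and the final step is to observe that $P$ is \emph{symmetric} in $z_1,z_2$: its $z_1^2$- and $z_2^2$-coefficients coincide because $-q^{-2}(q^2-1)=q^{-2}-1$, while the $z_1z_2$-coefficient is manifestly symmetric. Since the denominator $ABA'B'$ is already symmetric, the difference of the expression and its $z_1\leftrightarrow z_2$ image equals $\big(P(z_1,z_2)-P(z_2,z_1)\big)/(ABA'B')=0$, which is exactly the lemma. I expect the only genuine obstacle to be bookkeeping — organizing the cancellations in $\mathcal N$ and the final symmetry check carefully enough to avoid arithmetic slips — since conceptually everything follows once one recognizes that the denominator is symmetric and that the surviving numerator is a symmetric quadratic form in $z_1,z_2$.
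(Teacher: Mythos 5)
Your computation of the numerator is correct, but the step on which the whole proposal rests---``since no $\delta$-function occurs in the statement, the asserted identity is purely algebraic, and it suffices to verify it in $\C(q)(z_1,z_2,w)$''---is precisely what fails, and it is where the actual content of the lemma lies. In this paper the inverses are not field inverses: as the lemma is consumed in Proposition \ref{prop:qserre1} (through the OPE Lemma \ref{lem:serreRelation0000} and the convention \eqref{eq:q2notion}), $(z_a-q^{-1}w)^{-1}$ stands for the formal expansion $\sum_{n\ge 0}q^{-n}w^n z_a^{-n-1}$ in nonnegative powers of $w/z_a$, while $(w-q^{-1}z_a)^{-1}$ stands for the expansion in nonnegative powers of $z_a/w$; which expansion occurs is dictated by the ordering of the vertex operators. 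Two expansions of the same rational function in different regions are \emph{not} equal as formal series: their difference is a $\delta$-function, e.g.\ $(w-q^{-1}z_a)^{-1}-(-q^{-1}z_a+w)^{-1}=w^{-1}\delta(q^{-1}z_a/w)$, and in the simplest case $\iota_{z,w}(z-w)^{-1}-\iota_{w,z}(z-w)^{-1}=z^{-1}\delta(w/z)\neq 0$ even though both terms represent ``the same rational function''. For this reason the three summands in the bracket, which are expanded in three different region patterns, cannot be put over a common denominator: there is no single embedding of the rational function field into formal series through which all of them factor. The charitable reading of your argument is that you multiply the left-hand side by the polynomial $ABA'B'$ and check that the product vanishes; that much is true, but multiplication by $ABA'B'$ is not injective on formal distributions---its kernel consists of linear combinations of $\delta$-functions supported on its zero locus $w\in\{q^{\pm 1}z_1,\,q^{\pm 1}z_2\}$---so your computation only proves that the left-hand side is some combination of such $\delta$-terms, not that it vanishes.

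This is why the paper's proof has the shape it does. It first introduces $P(z_1,z_2,w)\in\C((z_1,z_2))((w))$, the expansion of all three summands in the single region where $w$ is smallest; it then computes the explicit $\delta$-corrections by which each half of the antisymmetrized expression differs from $P$ (for instance the middle term contributes $(q+q^{-1})w^{-1}\delta(q^{-1}z_2/w)$, because $(z_1-q^{-2}z_2)(z_1-q^{-1}w)^{-1}$ restricts to $1$ on the support $w=q^{-1}z_2$, while the third term produces $\delta$-terms with coefficients such as $(qz_1-q^{-1}z_2)/(z_1-z_2)$); it verifies that all these corrections cancel in the difference; and only after that does it invoke the purely rational identity $P(z_1,z_2,w)=P(z_2,z_1,w)$. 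Your collapse of the bracket's numerator to $q^{-1}w\bigl((q^{-2}-1)z_1+(q^2-1)z_2\bigr)$ and the symmetry check on $(z_1-q^{-2}z_2)\bigl((q^{-2}-1)z_1+(q^2-1)z_2\bigr)$ is a correct and clean verification of that final step, and of nothing else. To turn the proposal into a proof you must add the $\delta$-function bookkeeping: fix the expansion region of every factor, write each half of the antisymmetrization as $P$ (with permuted arguments) plus explicit $\delta$-terms, and show that the $\delta$-terms cancel; this cancellation is the delicate part, since the coefficients $(qz_1-q^{-1}z_2)/(z_1-z_2)$ appearing on the supports of the deltas are themselves expansions whose direction must be tracked.
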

\begin{proof}
Set
\begin{align*}
  P(z_1,z_2,w)&=(z_1-q^{-2}z_2)\big((z_1-q^{-1}w)^{-1}(z_2-q^{-1}w)^{-1}
+(q+q^{-1})(z_1-q^{-1}w)^{-1}\\
\times(&-q^{-1}z_2+w)^{-1}+(-q^{-1}z_1+w)^{-1}(-q^{-1}z_2+w)^{-1}\big)\in\C((z_1,z_2))((w)).
\end{align*}
Note that
\begin{align*}
&(z_1-q^{-2}z_2)\big((z_1-q^{-1}w)^{-1}(z_2-q^{-1}w)^{-1}
+(q+q^{-1})(z_1-q^{-1}w)^{-1}\\
&\quad\times(w-q^{-1}z_2)^{-1}+(w-q^{-1}z_1)^{-1}(w-q^{-1}z_2)^{-1}\big)
=P(z_1,z_2,w)\\
+&
(q+q\inverse)w\inverse\delta\(\frac{q\inverse z_2}{w}\)
+\frac{qz_1-q\inverse z_2}{z_1-z_2}\(w\inverse\delta\(\frac{q\inverse z_1}{w}\)
+w\inverse\delta\(\frac{q\inverse z_2}{w}\)\),\\
&(z_2-q^{-2}z_1)\big((z_1-q^{-1}w)^{-1}(z_2-q^{-1}w)^{-1}
+(q+q^{-1})(z_2-q^{-1}w)^{-1}\\
&\quad\times(w-q^{-1}z_1)^{-1}+(w-q^{-1}z_1)^{-1}(w-q^{-1}z_2)^{-1}\big)
=P(z_2,z_1,w)\\
+&
(q+q\inverse)w\inverse\delta\(\frac{q\inverse z_1}{w}\)
+\frac{qz_2-q\inverse z_1}{z_1-z_2}\(w\inverse\delta\(\frac{q\inverse z_1}{w}\)
+w\inverse\delta\(\frac{q\inverse z_2}{w}\)\).
\end{align*}
This implies that the left hand-side of the identity given in the lemma equals to
\begin{eqnarray*}
P(z_1,z_2,w)-P(z_2,z_1,w).
\end{eqnarray*}
A direct verification shows that this formal series is zero, as desired.
\end{proof}

Using the above Lemma, we can verify the relation (Q9).
\begin{prop}\label{prop:qserre1}
For $i,j\in I$ such that $a_{ij}<0$ and $j\not\in\mathcal O(i)$, one has
\begin{eqnarray*}
\sum_{\sigma\in S_2}\Big\{&p_{ij}^\pm(z_{\sigma(1)},z_{\sigma(2)})\big(X_i^\pm(z_{\sigma(1)})X_i^\pm(z_{\sigma(2)})
    X_j^\pm(w)
   -[2]_{q^{d_{ij}}}
    X_i^\pm(z_{\sigma(1)})X_j^\pm(w)X_i^\pm(z_{\sigma(2)})\\
&    +X_j^\pm(w)X_i^\pm(z_{\sigma(1)})X_i^\pm(z_{\sigma(2)})\big)
\Big\}\ =0.
\end{eqnarray*}
\end{prop}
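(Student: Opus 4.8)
The plan is to reduce the proposition to the purely rational identity established in Lemma \ref{lem:ps0}. The essential point is that the three-term combination $X_i^\pm(z_1)X_i^\pm(z_2)X_j^\pm(w)-[2]_{q^{d_{ij}}}X_i^\pm(z_1)X_j^\pm(w)X_i^\pm(z_2)+X_j^\pm(w)X_i^\pm(z_1)X_i^\pm(z_2)$ has, after the OPE of Lemma \ref{lem:serreRelation0000}, a common normal-ordered factor $:X_i^\pm(z_1)X_i^\pm(z_2)X_j^\pm(w):$ multiplied by an explicit scalar rational function. First I would apply Lemma \ref{lem:serreRelation0000} to each of the three products (with $r=2,1,0$ respectively). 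Because the normal-ordered product is symmetric in $z_1,z_2$ and the factors involving only the $z_s$-pairings and the overall monomial prefactor are common to all three terms, each product collapses to the same $:X_i^\pm(z_1)X_i^\pm(z_2)X_j^\pm(w):$ times a scalar built from the cross-pairings between the $z_s$ and $w$.

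Next I would isolate those cross-pairing factors. Since $j\notin\mathcal O(i)$, for each $k\in\Z_N$ the exponent $\<\al_i|\mu^k\al_j\>$ is either $0$ or $-1$, so the relevant $q^2$-bracket factors $(z_s-q^{\mp1}\xi^k w)_{q^2}^{\<\al_i|\mu^k\al_j\>}$ and their reflected counterparts are genuine inverses of linear factors exactly on $\Gamma_{ij}^-=\{k:\<\al_i|\mu^k\al_j\>=-1\}$, which by the linking condition (LC) is a subgroup of order $d_{ij}$. Collecting these over $k\in\Gamma_{ij}^-$ produces, for the $X_i(z_1)X_i(z_2)X_j(w)$ term, a factor $\prod_{k\in\Gamma_{ij}^-}(z_1-q^{\mp1}\xi^k w)^{-1}(z_2-q^{\mp1}\xi^k w)^{-1}$; for the middle term $\prod_k(z_1-q^{\mp1}\xi^k w)^{-1}(w-q^{\mp1}\xi^{-k}z_2)^{-1}$ together with the bracket $[2]_{q^{d_{ij}}}$; and for the last term $\prod_k(w-q^{\mp1}\xi^{-k}z_1)^{-1}(w-q^{\mp1}\xi^{-k}z_2)^{-1}$. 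The key computational observation is that after substituting $u=w^{d_{ij}}$, $v_s=z_s^{d_{ij}}$ and using that $\{\xi^k:k\in\Gamma_{ij}^-\}$ are the $d_{ij}$-th roots of a fixed power, each product over the subgroup telescopes into a single factor of the form $(v_s-q^{\mp d_{ij}}u)^{-1}$, converting the whole expression into the $d_{ij}$-scaled version of the scalar appearing in Lemma \ref{lem:ps0}, with $q$ replaced by $q^{d_{ij}}$ and $z_s,w$ replaced by $z_s^{d_{ij}},w^{d_{ij}}$.

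At this stage I would multiply by the prefactor $p_{ij}^\pm(z_1,z_2)$. The explicit form of $p_{ij}^\pm$ was chosen precisely so that, after the telescoping, the polynomial factor $\dfrac{q^{\pm2d_{ij}}z^{d_{ij}}-w^{d_{ij}}}{q^{\pm2d_i}z^{d_i}-w^{d_i}}$ cancels the surviving common pairing contribution from the $z_1,z_2$-pairing and the extra $d_i$-denominators, while the factor $(z^{d_{ii}}+q^{\mp d_{ii}}w^{d_{ii}})$ handles the leading term $(z_1^{d_{ij}}-q^{\mp2d_{ij}}z_2^{d_{ij}})$ needed to match $P$. The outcome is that the entire sum over $\sigma\in S_2$ becomes $:X_i^\pm(z_1)X_i^\pm(z_2)X_j^\pm(w):$ times exactly the $q^{d_{ij}}$-rescaled left-hand side of the identity in Lemma \ref{lem:ps0}, which vanishes identically. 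I would therefore conclude that the sum is zero.

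The main obstacle I anticipate is the bookkeeping of the telescoping step: verifying that the products over the subgroup $\Gamma_{ij}^-$ of the individual $q^2$-bracket factors really do collapse to the single rescaled linear factors claimed, and matching the signs and powers of $q$ so that the combination agrees with $P(z_1^{d_{ij}},z_2^{d_{ij}},w^{d_{ij}})-P(z_2^{d_{ij}},z_1^{d_{ij}},w^{d_{ij}})$ after multiplication by $p_{ij}^\pm$. A subsidiary delicacy is the treatment of the $\delta$-function correction terms (the $w^{-1}\delta$ contributions that appeared in the proof of Lemma \ref{lem:ps0}): one must check they are killed by the symmetrization over $\sigma$ together with the symmetry of the normal-ordered factor, so that only the regular part $P-P$ survives. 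Once these are confirmed, the proof reduces to invoking Lemma \ref{lem:ps0} and Lemma \ref{lem:serreRelation0000}.
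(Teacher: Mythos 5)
Your proposal is correct and follows essentially the same route as the paper's own proof: apply the OPE of Lemma \ref{lem:serreRelation0000}, collapse the $\Z_N$-products of $q^2$-binomial factors (supported on the subgroup $\Gamma_{ij}^-$) into rational functions of $z_s^{d_{ij}}$ and $w^{d_{ij}}$, multiply by $p_{ij}^\pm$, and invoke Lemma \ref{lem:ps0} with $q\mapsto q^{d_{ij}}$, $z_s\mapsto z_s^{d_{ij}}$, $w\mapsto w^{d_{ij}}$. Only your bookkeeping of $p_{ij}^\pm$ is transposed: the numerator $q^{\pm 2d_{ij}}z_1^{d_{ij}}-z_2^{d_{ij}}$ is what supplies the leading factor $(z_1^{d_{ij}}-q^{\mp 2d_{ij}}z_2^{d_{ij}})$, while $(z_1^{d_{ii}}+q^{\mp d_{ii}}z_2^{d_{ii}})$ cancels the denominator of the $z_1,z_2$-pairing coming from the first collapse identity and the denominator $q^{\pm 2d_i}z_1^{d_i}-z_2^{d_i}$ absorbs the factor $(z_1^{d_i}-q^{\mp 2d_i}z_2^{d_i})$; moreover, the $\delta$-function corrections you worry about are internal to the proof of Lemma \ref{lem:ps0}, so no separate check is needed once that lemma is quoted.
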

\begin{proof} First of all we have the following:
\begin{eqnarray*}
&&\label{eq:serre9}\kprod(z-q^{\mp 1}\xi^kw)_{q^2}^{\<\al_i|\mu^k\al_i\>}
    =\frac{
        (z^{d_i}-w^{d_i})
        (z^{d_i}-q^{\mp 2d_i}w^{d_i})
    }{
        (z^{d_{ii}}+q^{\mp d_{ii}}w^{d_{ii}})
    },\\
&&\kprod(z-q^{\mp 1}\xi^kw)_{q^2}^{\<\al_i|\mu^k\al_j\>}
    =(z^{d_{ij}}-q^{\mp d_{ij}}w^{d_{ij}})^{-1}.\notag
\end{eqnarray*}
These two identities and Lemma \ref{lem:serreRelation0000} then imply that
\begin{eqnarray*}
&&p^\pm_{ij}(z_1,z_2)
    X^\pm(\al_i,z_1)\cdots X^\pm(\al_i,z_r)X^\pm(\al_j,w)
    X^\pm(\al_i,z_{r+1})\cdots X^\pm(\al_i,z_2)\\
&&\quad=:X^\pm(\al_i,z_1)X^\pm(\al_i,z_2)X^\pm(\al_j,w):
    (z_1z_2)^{-\<\al_{i(0)}|\al_i+\al_j\>/2}
    w^{-\<\al_{i(0)}|\al_j\>}\\
&&\times
    \left(
        z_1^{d_i}-z_2^{d_i}
    \right)
    \left(
        z_1^{d_{ij}}-q^{\mp 2d_{ij}}z_2^{d_{ij}}
    \right)\prod\limits_{1\leq s\leq r}
        \left(
            z_s^{d_{ij}}-q^{\mp d_{ij}}w^{d_{ij}}
        \right)\inverse
    \prod\limits_{s>r}
        \left(
            -w^{d_{ij}}+q^{\mp d_{ij}}z_s^{d_{ij}}
        \right)\inverse.
\end{eqnarray*}
Therefore the LHS of the quantum Serre relation in this case is the product of
\[:X^\pm(\al_i,z_1)X^\pm(\al_i,z_2)X^\pm(\al_j,w):
    (z_1z_2)^{-\<\al_{i(0)}|\al_i+\al_j\>/2}
    w^{-\<\al_{i(0)}|\al_j\>}
    \left(
        z_1^{d_i}-z_2^{d_i}
    \right)\]
 and the following formal series
    \begin{align*}
&(z_1^{d_{ij}}-q^{-2d_{ij}}z_2^{d_{ij}})\big((z_1^{d_{ij}}-q^{-d_{ij}}w^{d_{ij}})^{-1}(z_2^{d_{ij}}-q^{-d_{ij}}w^{d_{ij}})^{-1}
+[2]_{q^{d_{ij}}}(z_1^{d_{ij}}-q^{-d_{ij}}w^{d_{ij}})^{-1}\\
&(w^{d_{ij}}-q^{-d_{ij}}z_2^{d_{ij}})^{-1}+(w^{d_{ij}}-q^{-d_{ij}}z_1^{d_{ij}})^{-1}(w^{d_{ij}}-q^{-d_{ij}}z_2^{d_{ij}})^{-1}\big)-(z_1\leftrightarrow z_2)£¬
\end{align*}
which is equal to zero by Lemma \ref{lem:ps0}.
\end{proof}

Finally we consider the quantum Serre relation (Q10).
\begin{prop}\label{prop:qserre2} If $i\in I$ with $d_{ii}>0$, then
\begin{align*}
\sum_{\sigma\in S_3} p_i(z_{\sigma(1)},z_{\sigma(2)},z_{\sigma(3)})\,
                X_i^\pm(z_{\sigma(1)})X_i^\pm(z_{\sigma(2)})X_i^\pm(z_{\sigma(3)})=0.
\end{align*}
\end{prop}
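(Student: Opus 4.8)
The plan is to reduce the proposition to a purely combinatorial identity of rational functions via the operator product expansion, exactly as was done for (Q9) in Proposition \ref{prop:qserre1}. First I would apply Lemma \ref{lem:serreRelation0000} with $j=i$ to the product $X_i^\pm(z_1)X_i^\pm(z_2)X_i^\pm(z_3)$ (here there is no distinguished variable $w$, so all three factors are of type $i$). This expresses the product as the normal-ordered operator $:X_i^\pm(z_1)X_i^\pm(z_2)X_i^\pm(z_3):$ times a scalar prefactor collecting the contractions $\kprod(z_s-q^{\mp1}\xi^k z_t)_{q^2}^{\<\al_i|\mu^k\al_i\>}$ for $1\le s<t\le 3$. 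The crucial input is the first contraction identity recorded inside the proof of Proposition \ref{prop:qserre1}, namely
\begin{align*}
\kprod(z-q^{\mp 1}\xi^kw)_{q^2}^{\<\al_i|\mu^k\al_i\>}
    =\frac{(z^{d_i}-w^{d_i})(z^{d_i}-q^{\mp 2d_i}w^{d_i})}{z^{d_{ii}}+q^{\mp d_{ii}}w^{d_{ii}}},
\end{align*}
which follows from Lemma \ref{lem:s-i=2} and the decomposition \eqref{eq:decom}. Substituting this for each of the three pairs $(z_1,z_2)$, $(z_1,z_3)$, $(z_2,z_3)$ produces a common symmetric factor from the three numerators and a product of three denominators of the form $z_a^{d_{ii}}+q^{\mp d_{ii}}z_b^{d_{ii}}$.

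Next I would observe that the normal-ordered operator $:X_i^\pm(z_1)X_i^\pm(z_2)X_i^\pm(z_3):$ is totally symmetric in $z_1,z_2,z_3$, so it factors out of the sum over $S_3$. The entire Serre expression therefore equals this symmetric operator times the scalar
\begin{align*}
\sum_{\sigma\in S_3}p_i(z_{\sigma(1)},z_{\sigma(2)},z_{\sigma(3)})\,\varphi(z_{\sigma(1)},z_{\sigma(2)},z_{\sigma(3)}),
\end{align*}
where $\varphi$ is the scalar prefactor coming from the contractions. By design the denominator appearing in $p_i(z_1,z_2,z_3)$, namely the product of the $(z_a^{d_i}-z_b^{d_i})(z_a^{d_i}-q^{\mp2d_i}z_b^{d_i})$ over $1\le a<b\le 3$, cancels exactly against the numerators $(z_a^{d_i}-z_b^{d_i})(z_a^{d_i}-q^{\mp2d_i}z_b^{d_i})$ produced by the three contraction factors. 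What survives is a sum over $S_3$ of the linear factor
\begin{align*}
q^{\mp\frac32 d_{ii}}z_{\sigma(1)}^{d_{ii}}-(q^{\frac{d_{ii}}2}+q^{-\frac{d_{ii}}2})z_{\sigma(2)}^{d_{ii}}+q^{\pm\frac32 d_{ii}}z_{\sigma(3)}^{d_{ii}}
\end{align*}
multiplied by a symmetric rational function in the variables $z_a^{d_{ii}}$ (the surviving $z_a^{d_{ii}}+q^{\mp d_{ii}}z_b^{d_{ii}}$ denominators and the remaining cross-numerators, which are symmetric).

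Thus the proof reduces, after setting $u_a=z_a^{d_{ii}}$ and $t=q^{d_{ii}/2}$, to checking that $\sum_{\sigma\in S_3}(t^{\mp3}u_{\sigma(1)}-(t+t^{-1})u_{\sigma(2)}+t^{\pm3}u_{\sigma(3)})\,R(u_{\sigma(1)},u_{\sigma(2)},u_{\sigma(3)})=0$, where $R$ is the explicit symmetric prefactor. Since $R$ is symmetric it can be pulled out of the sum over permutations up to the non-symmetric denominator factors, and the vanishing then follows from a direct antisymmetrization argument on the linear factor, parallel to the vanishing of $P(z_1,z_2,w)-P(z_2,z_1,w)$ in Lemma \ref{lem:ps0} but now over the full symmetric group $S_3$. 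The main obstacle I anticipate is bookkeeping: one must track precisely which contraction numerators are absorbed into the prefactor of $p_i$ and which remain, and verify that the residual factor is genuinely symmetric so that the $S_3$-antisymmetrization of the remaining linear form indeed collapses to zero. Carrying out this cancellation carefully—and confirming that the constant $\epsilon_i$ normalization in \eqref{eplsioni} is consistent with the scalar that emerges—is the delicate computational heart of the argument, but conceptually it is the same OPE-plus-symmetrization mechanism already used for (Q9).
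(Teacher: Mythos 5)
Your reduction is exactly the paper's: apply Lemma \ref{lem:serreRelation0000} with all three factors of type $i$, insert the contraction identity
\[
\kprod(z-q^{\mp 1}\xi^kw)_{q^2}^{\<\al_i|\mu^k\al_i\>}
    =\frac{(z^{d_i}-w^{d_i})(z^{d_i}-q^{\mp 2d_i}w^{d_i})}{z^{d_{ii}}+q^{\mp d_{ii}}w^{d_{ii}}},
\]
pull out the symmetric factor $:X_i^\pm(z_1)X_i^\pm(z_2)X_i^\pm(z_3):\,(z_1z_2z_3)^{-\<\al_{i(0)}|\al_i\>}$, and cancel the $d_i$-denominators of $p_i$ against the contraction numerators. (Your worry about $\epsilon_i$ is moot: (Q10) is homogeneous of degree three in $x_i^\pm$, so $\epsilon_i^3$ factors out of the whole relation.) The gap is in your last step, which is where all the content of the proposition lies.

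After the cancellation, the scalar series that must vanish is
\[
\sum_{\sigma\in S_3}\left(q^{\mp \frac{3}{2}d_{ii}}z_{\sigma(1)}^{d_{ii}}
        -(q^{\frac{d_{ii}}{2}}+q^{-\frac{d_{ii}}{2}})z_{\sigma(2)}^{d_{ii}}
        +q^{\pm \frac{3}{2}d_{ii}}z_{\sigma(3)}^{d_{ii}}\right)
    \prod\limits_{a<b}
    \frac{(z_{\sigma(a)}^{d_{ii}}-z_{\sigma(b)}^{d_{ii}})(z_{\sigma(a)}^{d_{ii}}
        -q^{\mp2d_{ii}}z_{\sigma(b)}^{d_{ii}})}
    {z_{\sigma(a)}^{d_{ii}}+q^{\mp d_{ii}} z_{\sigma(b)}^{d_{ii}}},
\]
and your mechanism for its vanishing fails. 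The product over $a<b$ is \emph{not} symmetric in $z_1,z_2,z_3$: the factors $(z_{\sigma(a)}^{d_{ii}}-q^{\mp2d_{ii}}z_{\sigma(b)}^{d_{ii}})$ and the denominators $(z_{\sigma(a)}^{d_{ii}}+q^{\mp d_{ii}}z_{\sigma(b)}^{d_{ii}})$ are neither symmetric nor antisymmetric under $a\leftrightarrow b$, so the product genuinely depends on $\sigma$. Worse, if your symmetry claim were true the proposition would be \emph{false}: writing $u_a=z_a^{d_{ii}}$ and $t=q^{d_{ii}/2}$, one computes
\[
\sum_{\sigma\in S_3}\bigl(t^{\mp3}u_{\sigma(1)}-(t+t^{-1})u_{\sigma(2)}+t^{\pm3}u_{\sigma(3)}\bigr)
=2(t+t^{-1})(t-t^{-1})^2\,(u_1+u_2+u_3),
\]
which is nonzero for generic $q$; a symmetric prefactor would pull out of the $S_3$-sum and leave a nonzero series. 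So the vanishing comes precisely from the interaction between the linear form and the $\sigma$-dependent product, and there is a further subtlety you gloss over: each denominator $(z_{\sigma(a)}^{d_{ii}}+q^{\mp d_{ii}}z_{\sigma(b)}^{d_{ii}})^{-1}$ is a formal expansion in $z_{\sigma(b)}/z_{\sigma(a)}$, i.e. each summand lives in a $\sigma$-dependent expansion region, so one is dealing with a delta-function-calculus identity, not a naive rational-function identity. The paper does not prove this identity by antisymmetrization; it quotes it as a known nontrivial identity, \cite[(4.44)]{J-inv}. To close your argument you must either invoke that identity or prove it directly (e.g. by clearing denominators and tracking delta-function contributions in the spirit of Lemma \ref{lem:ps0}, a substantially heavier computation than the $S_2$ case treated there).
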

\begin{proof} By Lemmas \ref{lem:serreRelation0000}--\eqref{eq:serre9} it follows that
the Serre relation in this case is
the product of the common factor
\[
:X^\pm_i(z_1)X^\pm_i(z_2)X_i^\pm(z_3): (z_1z_2z_3)^{-\<\al_{i(0)}|\al_i\>}\]
and the series:
\[\sum_{\sigma\in S_3}\Bigg\{
     \left(q^{\mp \frac{3}{2}d_{ii}}z_{\sigma(1)}^{d_{ii}}
        -(q^{\frac{d_{ii}}{2}}+q^{-\frac{d_{ii}}{2}})
            z_{\sigma(2)}^{d_{ii}}
        +q^{\pm \frac{3}{2}d_{ii}}
            z_{\sigma(3)}^{d_{ii}}\right)
    \prod\limits_{a<b}
    \frac{(z_{\sigma(a)}^{d_{ii}}-z_{\sigma(b)}^{d_{ii}})(z_{\sigma(a)}^{d_{ii}}
        -q^{\mp2d_{ii}}z_{\sigma(b)}^{d_{ii}})}
    {z_{\sigma(a)}^{d_{ii}}+q^{\mp d_{ii}} z_{\sigma(b)}^{d_{ii}}} \Bigg\}, \]
 which is equal to zero by
 \cite[(4.44)]{J-inv} as desired.

\end{proof}

\bigskip
\centerline{\bf Acknowledgments}
\bigskip

This work is supported by
the NSF of China under
Grant Nos. 11471268, 11501478, 11531004 and 11701183,
the China Postdoctoral Science Foundation grant 2016M602454,
the Fundamental Research Funds for the Central University grant 20720150003
as well as the Simons Foundation grant 523868.



\begin{bibdiv}
\begin{biblist}

\bib{AABGP}{book}{
      author={Allison, B.},
      author={Azam, S.},
      author={Berman, S.},
      author={Gao, Y.},
      author={Pianzola, A.},
       title={{E}xtended affine {L}ie algebras and their root systems},
   publisher={Mem. Amer. Math. Soc.},
        date={1997},
      volume={126},
}

\bib{ABP}{article}{
      author={Allison, B.},
      author={Berman, S.},
      author={Pianzola, A.},
       title={{Multiloop algebras, iterated loop algebras and extended affine
  {L}ie algebras of nullity 2}},
        date={2014},
     journal={J. Eur. Math. Soc.},
      volume={16},
      number={2},
       pages={327\ndash 385},
}

\bib{BGK}{article}{
      author={Berman, S.},
      author={Gao, Y.},
      author={Krylyuk, Y.},
       title={{Quantum tori and the structure of elliptic quasi-simple {L}ie
  algebras}},
        date={1996},
        ISSN={0022-1236},
     journal={J. Funct. Anal.},
      volume={135},
      number={2},
       pages={339\ndash 389},
}

\bib{CGJT}{article}{
      author={Chen, F.},
      author={Gao, Y.},
      author={Jing, N.},
      author={Tan, S.},
       title={Twisted vertex operators and unitary {L}ie algebras},
        date={2014},
     journal={Canad. J. Math.},
      volume={67},
      number={3},
       pages={573\ndash 596},
}

\bib{Da1}{article}{
      author={Damiani, I.},
       title={{D}rinfeld realization of affine quantum algebras: the
  relations},
        date={2012},
     journal={Publ. Res. Inst. Math. Sci.},
      volume={48},
      number={3},
       pages={661\ndash 733},
}

\bib{Da2}{article}{
      author={Damiani, I.},
       title={From the {D}rinfeld realization to the {D}rinfeld-{J}imbo
  presentation of affine quantum algebras: injectivity},
        date={2015},
     journal={Publ. Res. Inst. Math. Sci.},
      volume={51},
      number={1},
       pages={131\ndash 171},
}

\bib{Dr-new}{inproceedings}{
      author={{D}rinfeld, V.},
       title={A new realization of {Y}angians and quantized affine algebras},
        date={1988},
   booktitle={Soviet math. dokl},
      volume={36},
       pages={212\ndash 216},
}

\bib{FJ-vr-qaffine}{article}{
      author={Frenkel, I.},
      author={Jing, N.},
       title={Vertex representations of quantum affine algebras},
        date={1988},
     journal={Proc. Nat. Acad. Sci. U.S.A.},
      volume={85},
      number={24},
       pages={9373\ndash 9377},
}

\bib{Fr}{article}{
      author={Frenkel, I.},
      author={Reshetikhin, N.},
       title={Quantum affine algebras and holonomic difference equations},
        date={1992},
     journal={Comm. Math. Phys.},
      volume={146},
      number={1},
       pages={1\ndash 60},
}

\bib{FSS}{article}{
      author={Fuchs, J.},
      author={Schellekens, B.},
      author={Schweigert, C.},
       title={From {D}ynkin diagram symmetries to fixed point structures},
        date={1996},
     journal={Comm. Math. Phys.},
      volume={180},
      number={1},
       pages={39\ndash 97},
}

\bib{GKV}{article}{
      author={Ginzburg, V.},
      author={Kapranov, M.},
      author={Vasserot, E.},
       title={Langlands reciprocity for algebraic surfaces},
        date={1995},
     journal={Math. Res. Lett.},
      volume={2},
       pages={147\ndash 160},
}

\bib{He-representation-coprod-proof}{article}{
      author={Hernandez, D.},
       title={Representations of quantum affinizations and fusion product},
        date={2005},
     journal={Transform. Groups},
      volume={10},
      number={2},
       pages={163\ndash 200},
}

\bib{He-drinfeld-coproduct}{article}{
      author={Hernandez, D.},
       title={{D}rinfeld coproduct, quantum fusion tensor category and
  applications},
        date={2007},
     journal={Proc. Lond. Math. Soc.},
      volume={95},
      number={3},
       pages={567\ndash 608},
}

\bib{He-total}{article}{
      author={Hernandez, D.},
       title={Quantum toroidal algebras and their representations},
        date={2009},
     journal={Selecta Math.},
      volume={14},
      number={3-4},
       pages={701\ndash 725},
}

\bib{J-inv}{article}{
      author={Jing, N.},
       title={Twisted vertex representations of quantum affine algebras},
        date={1990},
     journal={Invent. Math.},
      volume={102},
      number={1},
       pages={663\ndash 690},
}

\bib{J-KM}{article}{
      author={Jing, N.},
       title={Quantum {K}ac-{M}oody algebras and vertex representations},
        date={1998},
     journal={Lett. Math. Phys.},
      volume={44},
      number={4},
       pages={261\ndash 271},
}

\bib{J-Z-alg}{article}{
      author={Jing, N.},
       title={Quantum {Z}-algebras and representations of quantum affine
  algebras},
        date={2000},
     journal={Commun. Algebra},
      volume={28},
      number={2},
       pages={829\ndash 844},
}

\bib{JimboM}{book}{
      author={Jimbo, M.},
      author={Miwa, T.},
       title={Algebraic analysis of solvable lattice models},
   publisher={Amer. Math. Soc.},
        date={1994},
      volume={85},
}

\bib{Kac-book}{book}{
      author={Kac, V.},
       title={{Infinite dimensional {L}ie algebras}},
   publisher={Cambridge University Press},
        date={1994},
        ISBN={0521466938},
}

\bib{KW}{article}{
      author={Kac, V.},
      author={Wang, S.},
       title={On automorphisms of {K}ac-{M}oody algebras and groups},
        date={1992},
     journal={Adv. Math.},
      volume={92},
      number={2},
       pages={129\ndash 195},
}

\bib{Lep}{article}{
      author={Lepowsky, J.},
       title={{Calculus of twisted vertex operators}},
        date={1985},
     journal={Proc. Natl. Acad. Sci. USA},
      volume={82},
       pages={8295\ndash 8299},
}

\bib{Naka-quiver}{article}{
      author={Nakajima, H.},
       title={Quiver varieties and finite dimensional representations of
  quantum affine algebras},
        date={2001},
     journal={J. Amer. Math. Soc.},
      volume={14},
      number={1},
       pages={145\ndash 238},
}

\bib{N2}{article}{
      author={Neher, E.},
       title={{Extended affine {L}ie algebras}},
        date={2004},
     journal={C.R. Math. Acad. Sci. Soc. R. Can.},
      volume={26},
       pages={90\ndash 96},
}

\end{biblist}
\end{bibdiv}

\end{document}